\newcommand{\R}{\mathbb R}
\newcommand{\C}{\mathbb C}
\renewcommand{\P}{\mathbb P}
\newcommand{\CAP}{\operatorname{cap}}
\newcommand{\ii}{\operatorname{i}}
\newcommand{\sn}{\operatorname{sn}}
\newcommand{\cn}{\operatorname{cn}}
\newcommand{\dn}{\operatorname{dn}}
\newcommand{\zn}{\operatorname{zn}}
\newcommand{\cc}{^{\operatorname{c}}}
\begin{document}

%----- Title -----%

\title{Estimates for the Asymptotic Convergence Factor of Two Intervals\footnote{published in: Journal of Computational and Applied Mathematics {\bf 236} (2011), 26--36.}}
\author{Klaus Schiefermayr\footnote{University of Applied Sciences Upper Austria, School of Engineering and Environmental Sciences, Stelzhamerstrasse\,23, 4600 Wels, Austria, \textsc{klaus.schiefermayr@fh-wels.at}}}
\date{}
\maketitle

\theoremstyle{plain}
\newtheorem{theorem}{Theorem}
\newtheorem{corollary}{Corollary}
\newtheorem{lemma}{Lemma}
\newtheorem{definition}{Definition}
\theoremstyle{definition}
\newtheorem*{remark}{Remark}
\newtheorem*{example}{Example}

\begin{abstract}
Let $E$ be the union of two real intervals not containing zero. Then $L_n^r(E)$ denotes the supremum norm of that polynomial $P_n$ of degree less than or equal to $n$, which is minimal with respect to the supremum norm provided that $P_n(0)=1$. It is well known that the limit $\kappa(E):=\lim_{n\to\infty}\sqrt[n]{L_n^r(E)}$ exists, where $\kappa(E)$ is called the asymptotic convergence factor, since it plays a crucial role for certain iterative methods solving large-scale matrix problems. The factor $\kappa(E)$ can be expressed with the help of Jacobi's elliptic and theta functions, where this representation is very involved. In this paper, we give precise upper and lower bounds for $\kappa(E)$ in terms of elementary functions of the endpoints of $E$.
\end{abstract}

\noindent\emph{Mathematics Subject Classification (2000):} 41A17, 33E05, 41A29, 65F10

\noindent\emph{Keywords:} Estimated asymptotic convergence factor, Inequality, Jacobian elliptic functions, Jacobian theta functions, Two intervals

%---------------------%
\section{Introduction}
%---------------------%

%----- Text -----%

For $n\in{\mathbb N}$, let $\P_n$ denote the set of all polynomials of degree at most $n$ with real coefficients. Let $E$ be the union of two real intervals, i.e.,
\begin{equation}\label{E}
E:=[a_1,a_2]\cup[a_3,a_4],\qquad~a_1<a_2<a_3<a_4,
\end{equation}
and let the supremum norm $\|\cdot\|_E$ associated with $E$ be defined by
\begin{equation}
\|P_n\|_E:=\max_{x\in{E}}|P_n(x)|
\end{equation}
for any polynomial $P_n\in\P_n$. Consider the following two classical approximation problems:
\begin{equation}\label{Ln}
L_n(E):=\|T_n(\cdot,E)\|_E:=\min\bigl\{\|P_n\|_E:P_n\in\P_n\setminus\P_{n-1},P_n~\text{monic~polynomial}\bigr\}
\end{equation}
and, $0\notin{E}$,
\begin{equation}\label{Lnr}
L_n^r(E,0):=\|R_n(\cdot,E,0)\|_E:=\min\bigl\{\|P_n\|_E:P_n\in\P_n,P_n(0)=1\bigr\}.
\end{equation}

%----- Text -----%

The optimal (monic) polynomial $T_n(x,E)=x^n+\ldots\in\P_n\setminus\P_{n-1}$ in \eqref{Ln} is called the Chebyshev polynomial on $E$ and $L_n(E)$ is called the minimum deviation of $T_n(\cdot,E)$ on $E$. It is well known that the limit
\begin{equation}\label{cap}
\CAP{E}:=\lim_{n\to\infty}\sqrt[n]{L_n(E)}
\end{equation}
exists, where $\CAP{E}$ is called the Chebyshev constant or the logarithmic capacity of $E$. Concerning the general properties of $\CAP{C}$, $C\subset\C$ compact, we refer to \cite{Kirsch} and \cite[chapter\,5]{Ransford}.

%----- Text -----%

The optimal polynomial $R_n(\cdot,E,0)\in\P_n$ in \eqref{Lnr} is called the \emph{minimal residual polynomial} for the degree $n$ on $E$ and the quantity $L_n^r(E,0)$ is called the minimum deviation of $R_n(\cdot,E,0)$ on $E$. Note that we say \emph{for} the degree $n$ but not \emph{of} degree $n$ since the minimal residual polynomial for the degree $n$ on $E$ is a polynomial of degree $n$ or $n-1$, see \cite{Sch-2010}. As above, the limit
\begin{equation}\label{kappa}
\kappa(E,0):=\lim_{n\to\infty}\sqrt[n]{L_n^r(E,0)}
\end{equation}
exists, see, e.g.\ \cite{Kuijlaars} or \cite{DTT}, where $\kappa(E,0)$ is usually called the \emph{estimated asymptotic convergence factor}. The approximation problem \eqref{Lnr} and the convergence factor \eqref{kappa} arise for instance in the context of solving large-scale matrix problems by Krylov subspace iterations. There is an enormous literature on these subject, hence we would like to mention only three references, the review of Discroll, Toh and Trefethen\,\cite{DTT}, the book of Fischer\,\cite{Fischer-Book} and the review of Kuijlaars\,\cite{Kuijlaars}.

%----- Text -----%

In the case of two intervals, both terms, $\kappa(E,0)$ and $\CAP{E}$, can be expressed with the help of Jacobi's elliptic and theta functions and this characterization goes back to the work of Achieser\,\cite{Achieser-1932}. Since, in both cases, the representation is very involved, it is desirable to have at least estimates of a simpler form. For $\CAP{E}$, such estimates are given in \cite{Solynin}, \cite{Sch-2008-1}, and \cite{DubininKarp}. In this paper, we will give a precise upper and lower bound for $\kappa(E,0)$ in terms of elementary functions of the endpoints $a_1,a_2,a_3,a_4$ of $E$.

%----- Text -----%

The paper is organized as follows. In Section\,2, we recall the representations of $\kappa(E,0)$ and $\CAP{E}$ with the help of Jacobi's elliptic and theta functions. Using an inequality between a Jacobian theta function and the Jacobian elliptic functions, proved in Section\,6, we obtain an upper and a lower bound for $\kappa(E,0)$ in Section\,3, which is the main result of the paper. In Section\,4, the following extremum problem is solved: Given the length of the two intervals and the length of the gap between the two intervals, for which set of two intervals the convergence factor $\kappa(E,0)$ gets minimal? In Section\,5, as a byproduct, a new and simple lower bound for $\CAP{E}$ is derived. Finally, in Section\,6, the notion of Jacobi's elliptic and theta functions is recapitulated and several new inequalities, needed in Section\,3 and 4, are proved.

%---------------------------------------------------------------------------------------------------------------------%
\section{Representation of the Asymptotic Factor and the Logarithmic Capacity in Terms of Jacobi's Elliptic Functions}
%---------------------------------------------------------------------------------------------------------------------%

%----- Text -----%

Let $E$ be given as in \eqref{E} such that $0\notin{E}$. It is convenient to use the linear transformation
\begin{equation}\label{ell}
\ell(x):=\frac{2x-a_1-a_4}{a_4-a_1},
\end{equation}
which maps the set $E$ onto the normed set
\begin{equation}\label{Eh}
\hat{E}:=[-1,\alpha]\cup[\beta,1],
\end{equation}
where $\alpha:=\ell(a_2)$ and $\beta:=\ell(a_3)$. For the corresponding Chebyshev polynomials, we have
\begin{equation}
T_n(x,E)=\Bigl(\frac{a_4-a_1}{2}\Bigr)^nT_n(\ell(x),\hat{E}),
\end{equation}
thus
\begin{equation}
L_n(E)=\Bigl(\frac{a_4-a_1}{2}\Bigr)^nL_n(\hat{E})
\end{equation}
and
\begin{equation}
\CAP{E}=\frac{a_4-a_1}{2}\,\CAP\hat{E}.
\end{equation}
Concerning the minimal residual polynomial, there is
\begin{equation}
R_n(x,E,0)=R_n(\ell(x),\hat{E},\xi),
\end{equation}
where $\xi:=\ell(0)$, thus
\begin{equation}
L_n^r(E,0)=L_n^r(\hat{E},\xi)
\end{equation}
and
\begin{equation}
\kappa(E,0)=\kappa(\hat{E},\xi),
\end{equation}
for details, see \cite[Sec.\,3.2]{Fischer-Book}.

%----- Text -----%

Let $\hat{E}$ be given as in \eqref{Eh} with $-1<\alpha<\beta<1$ and let $\xi\in\R\setminus\hat{E}$. Then there exists a (uniquely determined) Green's function for $\hat{E}\cc:=\overline{\C}\setminus\hat{E}$ (where $\overline{\C}:=\C\cup\infty$) with pole at infinity, denoted by $g(z;\hat{E}\cc,\infty)$. The Green's function is defined by the following three properties:
\begin{itemize}
\item $g(z;\hat{E}\cc,\infty)$ is harmonic in $\hat{E}\cc$.
\item $g(z;\hat{E}\cc,\infty)-\log|z|$ is harmonic in a neighbourhood of infinity.
\item $g(z;\hat{E}\cc,\infty)\to0$ as $z\to\hat{E}$, $z\in\hat{E}\cc$.
\end{itemize}
With the Green's function $g(z;\hat{E}\cc,\infty)$, the estimated asymptotic convergence factor\\ $\kappa(\hat{E},\xi)$ can be characterized by
\begin{equation}\label{kappa-g}
\kappa(\hat{E},\xi)=\exp(-g(\xi;\hat{E}\cc,\infty)).
\end{equation}
This connection was first observed by Eiermann, Li and Varga\,\cite{ELV} (for more general sets), see also \cite[Sec.\,3.1]{Fischer-Book}, \cite{Kuijlaars} and \cite{DTT}.

%----- Text -----%

Let us recall the construction of the Green's function for $\hat{E}\cc$, due to Achieser\,\cite{Achieser-1932}, see also \cite{Fischer-1992} and in particular \cite[Chapter\,3]{Fischer-Book}. This characterization is mainly based on a heavy usage of Jacobi's elliptic and theta functions. For the notation and some basic properties of this class of functions, see the beginning of Section\,6.

%----- Text -----%

Define the modulus $k$ of Jacobi's elliptic functions $\sn(u)$, $\cn(u)$ and $\dn(u)$ and of Jacobi's theta functions $\Theta(u)$, $H(u)$, $H_1(u)$ and $\Theta_1(u)$ by
\begin{equation}\label{k}
k=\sqrt{\frac{2(\beta-\alpha)}{(1-\alpha)(1+\beta)}}.
\end{equation}
Then the complementary modulus $k':=\sqrt{1-k^2}$ is given by
\begin{equation}\label{k'}
{k'}=\sqrt{\frac{(1+\alpha)(1-\beta)}{(1-\alpha)(1+\beta)}}.
\end{equation}
Note that $0<k,k'<1$. Let $K\equiv{K}(k)$ be the complete elliptic integral of the first kind and let $K'\equiv{K}'(k):=K(k')$. Let $0<\rho<K$ be uniquely defined by the equation
\begin{equation}\label{sn}
\sn^2(\rho)=\frac{1-\alpha}{2}.
\end{equation}
By \eqref{k}, \eqref{sn} and \eqref{sncndn},
\begin{equation}\label{cndn}
\cn^2(\rho)=\frac{1+\alpha}{2} \qquad\text{and}\qquad\dn^2(\rho)=\frac{1+\alpha}{1+\beta}.
\end{equation}
Further, consider the function
\begin{equation}\label{phi}
\varphi(u):=\frac{\sn^2(u)\cn^2(\rho)+\cn^2(u)\sn^2(\rho)}{\sn^2(u)-\sn^2(\rho)}.
\end{equation}
Let
\[
{\cal{P}}:=\bigl\{u\in\C:u=\lambda{K}+\ii\lambda'K',~0<\lambda<1,~-1<\lambda'\leq1\bigr\}
\]
then $\varphi:{\cal P}\to\hat{E}\cc$ is a bijective mapping and especially the mappings $\varphi:[0,\rho)\to(-\infty,-1]$, $\varphi:[\rho,K]\to[1,\infty)$ and $\varphi:[\ii{K}',K+\ii{K}']\to[\alpha,\beta]$ are bijective.

%----- Text -----%

Then the Green's function for $\hat{E}\cc$ is given by
\begin{equation}
g(z;\hat{E}\cc,\infty)=\log\Bigl|\frac{H(u+\rho)}{H(u-\rho)}\Bigr|,\quad\text{where}\quad z=\varphi(u).
\end{equation}
Since $\xi\in\R\setminus\hat{E}$, $u^*\in(0,K)\cup(\ii{K}',K+\ii{K}')$ is uniquely determined by the equation $\varphi(u^*)=\xi$. Thus, by \eqref{kappa-g}, the convergence factor $\kappa(\hat{E},\xi)$ can be computed by
\[
\kappa(\hat{E},\xi)=\Bigl|\frac{H(u^*-\rho)}{H(u^*+\rho)}\Bigr|.
\]
Let us summarize these results in the following theorem.

%----- Theorem (Achieser & Fischer): convergence factor -----%

\begin{theorem}[Fischer\,\cite{Fischer-Book},~Achieser\,\cite{Achieser-1932}]\label{Thm-kappa}
Let $\hat{E}:=[-1,\alpha]\cup[\beta,1]$, $-1<\alpha<\beta<1$, let $\xi\in\R\setminus\hat{E}$, and let $k\in(0,1)$ and $\rho\in(0,K)$ be given by \eqref{k} and \eqref{sn}, respectively. Then, the asymptotic convergence factor $\kappa(\hat{E},\xi)$ is given by
\begin{equation}
\kappa(\hat{E},\xi)=\Bigl|\frac{H(u^*-\rho)}{H(u^*+\rho)}\Bigr|,
\end{equation}
where $u^*\in(0,K)\cup(\ii{K}',K+\ii{K}')$ is uniquely determined by the equation $\varphi(u^*)=\xi$, $\varphi$ defined in \eqref{phi}.
\end{theorem}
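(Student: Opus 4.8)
The plan is to obtain the formula by assembling the Green's-function characterization \eqref{kappa-g} with the explicit uniformization of $\hat{E}\cc$ recalled above. First I would use the bijectivity of $\varphi:{\cal P}\to\hat{E}\cc$: since $\xi\in\R\setminus\hat{E}$ lies in $\hat{E}\cc$, there is a unique $u^*\in{\cal P}$ with $\varphi(u^*)=\xi$, and because the three restrictions $\varphi:[0,\rho)\to(-\infty,-1]$, $\varphi:[\rho,K]\to[1,\infty)$ and $\varphi:[\ii K',K+\ii K']\to[\alpha,\beta]$ are bijective, the real point $\xi$ has its preimage in $(0,K)\cup(\ii K',K+\ii K')$ (excluding $u=\rho$, which is sent to $\infty$). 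Substituting $z=\xi$, $u=u^*$ into $g(z;\hat{E}\cc,\infty)=\log|H(u+\rho)/H(u-\rho)|$ and then into \eqref{kappa-g} gives
\[
\kappa(\hat{E},\xi)=\exp\!\Bigl(-\log\Bigl|\tfrac{H(u^*+\rho)}{H(u^*-\rho)}\Bigr|\Bigr)=\Bigl|\frac{H(u^*-\rho)}{H(u^*+\rho)}\Bigr|,
\]
which is the assertion. Thus the content of the theorem reduces to justifying the Green's-function representation itself.

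To that end I would verify that $G(z):=\log|H(u+\rho)/H(u-\rho)|$, $z=\varphi(u)$, satisfies the three defining properties of $g(\cdot;\hat{E}\cc,\infty)$. Harmonicity is checked first: away from the zeros of $H$ the functions $\log|H(u\pm\rho)|$ are harmonic in $u$, and since $\varphi$ is a conformal bijection, $G$ is harmonic in $z$ throughout $\hat{E}\cc$ except at images of those zeros lying in ${\cal P}$. Tracking the zero lattice of $H$ against the rectangle ${\cal P}$, the only such point is $u=\rho$, where the denominator of $\varphi$ in \eqref{phi} vanishes and hence $z=\infty$. Since $H$ has a simple zero at the origin, $\log|H(u-\rho)|\sim\log|u-\rho|$, while a short local expansion of \eqref{phi} near $u=\rho$ (using $(\sn^2)'=2\sn\cn\dn$) shows $\varphi(u)\sim C/(u-\rho)$; together these yield $G(z)-\log|z|\to\log|H(2\rho)/C|$, so $G$ carries exactly the required logarithmic pole at infinity and $G(z)-\log|z|$ is harmonic there.

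The main obstacle is the boundary condition $G\to0$ on $\hat{E}$. Here I would first pin down the preimages: direct evaluation of \eqref{phi} with $\sn(0)=0$ and $\cn(K)=0$ gives $\varphi(0)=-1$ and $\varphi(K)=1$, so the intervals $[-1,\alpha]$ and $[\beta,1]$ are the images of the two vertical boundary segments $\operatorname{Re}u=0$ and $\operatorname{Re}u=K$ of ${\cal P}$. On these segments I would exploit the symmetries of $H$ recalled in Section\,6: the modulus $k$ being real, the nome is real, so $H$ has real coefficients, $\overline{H(u)}=H(\bar u)$, and $H$ is odd. For $u=\ii t$ this gives $\overline{H(u+\rho)}=H(\rho-\ii t)=-H(u-\rho)$, hence $|H(u+\rho)|=|H(u-\rho)|$ and $G=0$; the segment $\operatorname{Re}u=K$ is treated identically after applying the quasi-period shift $H(u+2K)=-H(u)$ together with the conjugation symmetry about $K$. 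Establishing these theta-function identities and keeping careful track of which boundary segment corresponds to which interval is the delicate bookkeeping; once done, the uniqueness of the Green's function identifies $G$ with $g(\cdot;\hat{E}\cc,\infty)$ and the theorem follows.
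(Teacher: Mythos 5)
Your first paragraph coincides with the paper's entire treatment of this theorem: in the paper, Theorem~\ref{Thm-kappa} is not proved at all but stated as a summary of two quoted facts --- the characterization \eqref{kappa-g} of Eiermann--Li--Varga and Achieser's formula $g(z;\hat{E}\cc,\infty)=\log\bigl|H(u+\rho)/H(u-\rho)\bigr|$, $z=\varphi(u)$ --- combined exactly as you combine them. Where you go beyond the paper is in your second and third paragraphs, which sketch a proof of the quoted Green's-function formula itself; this is genuinely additional content, and the sketch is essentially sound: inside ${\cal P}$ the only zero of $H(\cdot-\rho)$ is at $u=\rho$ while $H(\cdot+\rho)$ has none, $\varphi$ has a simple pole at $\rho$, $\varphi(0)=-1$ and $\varphi(K)=1$, the two vertical edges map onto the two slits, and there the symmetries $\overline{H(u)}=H(\bar{u})$ (real nome), $H(-u)=-H(u)$ and $H(2K-u)=H(u)$ give $|H(u+\rho)|=|H(u-\rho)|$, i.e.\ vanishing boundary values. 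One step, however, needs repair before this counts as a complete proof: your claim that conformality of $\varphi$ alone makes $G$ harmonic throughout $\hat{E}\cc$ fails on the gap $(\alpha,\beta)$, because $(\alpha,\beta)$ is the image of the \emph{boundary} segment $\operatorname{Im}u=K'$ of ${\cal P}$, and $\varphi^{-1}$ is discontinuous across it (a point on one side of the gap has its preimage near the top edge, on the other side near $\operatorname{Im}u=-K'$). To obtain harmonicity across $(\alpha,\beta)$ you must invoke the quasi-periodicity $H(u+2\ii{K}')=-q^{-1}e^{-\ii\pi{u}/K}H(u)$, which leaves $|H(u+\rho)/H(u-\rho)|$ unchanged because $\rho$ is real, so that $G\circ\varphi$ extends $2\ii{K}'$-periodically and a continuous local branch of $\varphi^{-1}$ into the extended parameter strip can be used near the gap. (A minor slip as well: the limit of $G(z)-\log|z|$ at infinity is $\log|H(2\rho)/(H'(0)\,C)|$ rather than $\log|H(2\rho)/C|$, which does not affect the removability of the singularity.) With these points patched, your argument is a correct proof of a statement that the paper only cites from Achieser and Fischer.
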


%----- Text -----%

On the other hand, concerning the logarithmic capacity of $\hat{E}$, Achieser\,\cite{Achieser-1930} proved the following, see also \cite[Cor.\,8]{PehSch-2004}.

%----- Theorem (Achieser): Capacity of two intervals -----%

\begin{theorem}[Achieser\,\cite{Achieser-1930}]\label{Thm-AchieserCap}
Let $\hat{E}:=[-1,\alpha]\cup[\beta,1]$, $-1<\alpha<\beta<1$, and let $k\in(0,1)$ and $\rho\in(0,K)$ be given by \eqref{k} and \eqref{sn}, respectively. Then, the logarithmic capacity of $\hat{E}$ is given by
\begin{equation}\label{cap1}
\CAP\hat{E}=\frac{1+\beta}{2(1+\alpha)}\cdot\frac{\Theta^4(0)}{\Theta^4(\rho)}.
\end{equation}
\end{theorem}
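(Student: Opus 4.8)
The plan is to combine the explicit Green's function $g(z;\hat{E}\cc,\infty)=\log\bigl|H(u+\rho)/H(u-\rho)\bigr|$, $z=\varphi(u)$, with the classical identity relating the logarithmic capacity to the Robin constant,
\begin{equation*}
\log\CAP\hat{E}=\lim_{z\to\infty}\bigl(\log|z|-g(z;\hat{E}\cc,\infty)\bigr),
\end{equation*}
which holds because $g(z;\hat{E}\cc,\infty)-\log|z|\to-\log\CAP\hat{E}$ as $z\to\infty$. It therefore suffices to expand $\log|z|$ and $g$ simultaneously as $z\to\infty$ and to extract the surviving constant term.

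First I would locate the preimage of $z=\infty$. Since $\varphi:[0,\rho)\to(-\infty,-1]$ and $\varphi:[\rho,K]\to[1,\infty)$, infinity corresponds to $u=\rho$, the zero of the denominator $\sn^2(u)-\sn^2(\rho)$ in \eqref{phi}. Setting $u=\rho+\varepsilon$ and using $\tfrac{d}{du}\sn^2(u)=2\sn(u)\cn(u)\dn(u)$, the expansion of \eqref{phi} gives
\begin{equation*}
z=\varphi(\rho+\varepsilon)=\frac{\sn(\rho)\cn(\rho)}{\dn(\rho)}\cdot\frac{1}{\varepsilon}+O(1),
\end{equation*}
while, since $H$ is odd with $H(0)=0$, the Green's function becomes $g=\log\bigl|H(2\rho+\varepsilon)/H(\varepsilon)\bigr|=\log\bigl|H(2\rho)\bigr|-\log\bigl|H'(0)\bigr|-\log|\varepsilon|+o(1)$. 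The singular contributions $-\log|\varepsilon|$ cancel in $\log|z|-g$, leaving the intermediate identity
\begin{equation*}
\CAP\hat{E}=\frac{\sn(\rho)\cn(\rho)}{\dn(\rho)}\cdot\frac{H'(0)}{H(2\rho)}.
\end{equation*}

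It then remains to translate this into theta-function form. I would insert the standard quotient representations of $\sn,\cn,\dn$ through $H,H_1,\Theta_1,\Theta$ (recalled in Section\,6), the derivative value $H'(0)=\tfrac{\pi}{2K}H_1(0)\Theta_1(0)\Theta(0)$, and the duplication formula $H(2\rho)=2H(\rho)H_1(\rho)\Theta_1(\rho)\Theta(\rho)/\bigl(H_1(0)\Theta_1(0)\Theta(0)\bigr)$. After cancelling the common factors $H(\rho)H_1(\rho)$, the theta-null relations $k=H_1^2(0)/\Theta_1^2(0)$, $k'=\Theta^2(0)/\Theta_1^2(0)$ and $\Theta_1^2(0)=2K/\pi$ collapse every remaining constant to give $\CAP\hat{E}=\Theta^4(0)/\bigl(2k'\,\Theta^2(\rho)\Theta_1^2(\rho)\bigr)$. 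Finally, using $\dn^2(\rho)=k'\Theta_1^2(\rho)/\Theta^2(\rho)$ together with \eqref{cndn}, which yield $1/\bigl(2\dn^2(\rho)\bigr)=(1+\beta)/\bigl(2(1+\alpha)\bigr)$, I would arrive at \eqref{cap1}. The main obstacle is precisely this last bookkeeping: tracking the factor $\pi/2K$ that arises from the argument scaling between Jacobi's and the Whittaker--Watson theta functions and invoking the duplication and theta-null identities in exactly the right normalization, which is where constant errors most easily slip in.
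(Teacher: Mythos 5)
Your proof is correct, but there is nothing in the paper to compare it against: Theorem~\ref{Thm-AchieserCap} is imported purely by citation (Achieser 1930, see also Cor.~8 of the Peherstorfer--Schiefermayr reference), and the paper gives no argument for it. What you supply is a genuine, self-contained derivation from material the paper does set up in Section~2: the Green's function representation $g(z;\hat{E}\cc,\infty)=\log\bigl|H(u+\rho)/H(u-\rho)\bigr|$, $z=\varphi(u)$, combined with the Robin-constant characterization $\log\CAP\hat{E}=\lim_{z\to\infty}\bigl(\log|z|-g(z;\hat{E}\cc,\infty)\bigr)$; the identification of the Chebyshev constant \eqref{cap} with the capacity defined via the Robin constant is classical and is covered by the paper's reference to Ransford. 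I checked each step: the pole of $\varphi$ at $u=\rho$ indeed gives $z=\frac{\sn(\rho)\cn(\rho)}{\dn(\rho)}\varepsilon^{-1}+O(1)$, the $-\log|\varepsilon|$ singularities cancel to yield the intermediate identity $\CAP\hat{E}=\frac{\sn(\rho)\cn(\rho)}{\dn(\rho)}\cdot\frac{H'(0)}{H(2\rho)}$, and the reduction using $H'(0)=\frac{\pi}{2K}H_1(0)\Theta_1(0)\Theta(0)$, the $\theta_1$ duplication formula, the quotient representations \eqref{H-H1-T1}, and the theta-null values from Lemma~\ref{Lemma-Theta}\,(iv) collapses everything to $\frac{1}{2\dn^2(\rho)}\cdot\frac{\Theta^4(0)}{\Theta^4(\rho)}$, which by \eqref{cndn} is exactly \eqref{cap1}. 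As a sanity check, for $\alpha=-\beta=-\tfrac12$ one gets $k'=\tfrac13$, $\rho=\tfrac12 K$, and your intermediate identity yields $\CAP\hat{E}=\tfrac{\sqrt{3}}{4}$, the known capacity of $[-1,-\tfrac12]\cup[\tfrac12,1]$. The only inputs beyond the paper's Section~6 are the two standard identities $\theta_1'(0)=\theta_2(0)\theta_3(0)\theta_4(0)$ and the duplication formula for $\theta_1$, and your bookkeeping of the scaling factor $\pi/(2K)$ between $H(u)$ and $\theta_1(v)$ --- the spot you correctly flagged as error-prone --- is right.
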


%---------------------------------------------------------------------%
\section{Bounds for the Asymptotic Covergence Factor of Two Intervals}
%---------------------------------------------------------------------%

%----- Theorem: Bounds for the convergence factor -----%

\begin{theorem}\label{Thm-ineq-kappa}
Let $\hat{E}:=[-1,\alpha]\cup[\beta,1]$, $-1<\alpha<\beta<1$ and let $\xi\in\R\setminus\hat{E}$. Then, for the convergence factor $\kappa(\hat{E},\xi)$, the inequalities
\begin{equation}\label{ineq-kappa}
\frac{A_2}{A_1}\cdot{B}\leq\kappa(\hat{E},\xi)\leq\frac{A_1}{A_2}\cdot{B}
\end{equation}
hold, where
\begin{equation}\label{A}
\begin{aligned}
A_1&:=\sqrt[4]{(1-\alpha)(1+\beta)}+\sqrt[4]{(1+\alpha)(1-\beta)},\\
A_2&:=\sqrt[4]{8}\sqrt[4]{\sqrt{(1-\alpha)(1+\beta)}+\sqrt{(1+\alpha)(1-\beta)}}\sqrt[16]{(1-\alpha^2)(1-\beta^2)},
\end{aligned}
\end{equation}
and $B$ is given in the following:
\begin{enumerate}
\item For $\alpha<\xi<\beta$,
\begin{equation}\label{B1}
B:=\frac{\sqrt[4]{(1+\alpha)(1-\beta)}+\sqrt{1-\xi}-\sqrt{(\xi-\alpha)(\beta-\xi)}}
{\sqrt[4]{(1+\alpha)(1-\beta)}+\sqrt{1-\xi}+\sqrt{(\xi-\alpha)(\beta-\xi)}}.
\end{equation}
\item For $\xi\in\R\setminus[-1,1]$,
\begin{equation}\label{B2}
\begin{aligned}
B:=&\frac{(2\xi-\xi\alpha+\xi\beta-\alpha-\beta)\sqrt[4]{\frac{(1+\alpha)(1-\beta)}{(1-\alpha)(1+\beta)}}
+2\sqrt{(\xi-\alpha)(\xi-\beta)}-(\beta-\alpha)\sqrt{\xi^2-1}}
{(2\xi-\xi\alpha+\xi\beta-\alpha-\beta)\sqrt[4]{\frac{(1+\alpha)(1-\beta)}{(1-\alpha)(1+\beta)}}
+2\sqrt{(\xi-\alpha)(\xi-\beta)}+(\beta-\alpha)\sqrt{\xi^2-1}}\\
&\times\frac{\bigl|\sqrt{(1+\xi)(\xi-\alpha)}-\sqrt{(\xi-1)(\xi-\beta)}\bigr|}
{\sqrt{(1+\xi)(\xi-\alpha)}+\sqrt{(\xi-1)(\xi-\beta)}}
\end{aligned}
\end{equation}
\end{enumerate}
\end{theorem}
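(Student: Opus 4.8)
The plan is to start from the exact representation
$$\kappa(\hat E,\xi)=\Bigl|\frac{H(u^*-\rho)}{H(u^*+\rho)}\Bigr|$$
of Theorem~\ref{Thm-kappa} and to peel off its elementary part using the standard relation $H(u)=\sqrt{k}\,\sn(u)\,\Theta(u)$ between the theta function $H$ and $\sn,\Theta$ (see the beginning of Section~6). Writing
$$\kappa(\hat E,\xi)=\Bigl|\frac{\sn(u^*-\rho)}{\sn(u^*+\rho)}\Bigr|\cdot\Bigl|\frac{\Theta(u^*-\rho)}{\Theta(u^*+\rho)}\Bigr|,$$
the first factor is an \emph{elementary} function of $\alpha,\beta,\xi$ that I would evaluate in closed form, while the second factor is the genuinely transcendental part. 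The idea is to replace $\Theta(u)$ by an elementary approximant $\tilde\Theta(u)$ supplied by the theta--elliptic inequality of Section~6; the ratio $\tilde\Theta(u^*-\rho)/\tilde\Theta(u^*+\rho)$ contributes the remaining, non-$\sn$, part of $B$, and the error of the approximation is what gets trapped between $A_2/A_1$ and $A_1/A_2$.

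First I would assemble the elliptic data. Solving $\varphi(u^*)=\xi$ with $\varphi$ as in \eqref{phi} and with $\sn^2(\rho),\cn^2(\rho),\dn^2(\rho)$ as in \eqref{sn}--\eqref{cndn} gives
$$\sn^2(u^*)=\frac{(1-\alpha)(1+\xi)}{2(\xi-\alpha)},$$
whence $\cn^2(u^*)=1-\sn^2(u^*)$ and $\dn^2(u^*)=1-k^2\sn^2(u^*)$ simplify to rational expressions in $\alpha,\beta,\xi$. The addition theorem then yields $\sn(u^*-\rho)/\sn(u^*+\rho)=(X-Y)/(X+Y)$ with $X=\sn(u^*)\cn(\rho)\dn(\rho)$ and $Y=\cn(u^*)\dn(u^*)\sn(\rho)$. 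For $\xi\in\R\setminus[-1,1]$ (where $u^*\in(0,K)$ is real) one computes $X^2/Y^2=(1+\xi)(\xi-\alpha)/[(\xi-1)(\xi-\beta)]$, so that
$$\frac{\sn(u^*-\rho)}{\sn(u^*+\rho)}=\frac{\sqrt{(1+\xi)(\xi-\alpha)}-\sqrt{(\xi-1)(\xi-\beta)}}{\sqrt{(1+\xi)(\xi-\alpha)}+\sqrt{(\xi-1)(\xi-\beta)}},$$
which (up to the absolute value that handles the sign) is exactly the second fraction in \eqref{B2}; the first fraction must then emerge from the $\tilde\Theta$-ratio. For $\alpha<\xi<\beta$ one has $u^*=t+\ii K'$, and I would first apply the imaginary-argument transformation $\sn(v+\ii K')=1/(k\,\sn v)$ (and its analogues for $\cn,\dn,\Theta$) to reduce everything to real arguments $t\pm\rho$; here the $\sn$-part and the $\tilde\Theta$-part combine into the single fraction \eqref{B1}.

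Next I would invoke the inequality of Section~6, which I expect to take the two-sided form $A_2\,|N(u)|\le|H(u)|\le A_1\,|N(u)|$ on the range swept out by $u^*\pm\rho$, where $N(u)=\sqrt{k}\,\sn(u)\,\tilde\Theta(u)$ is elementary and $A_1,A_2$ are as in \eqref{A}. Applying this to both $u=u^*-\rho$ and $u=u^*+\rho$ and dividing, the constants produce precisely the factors $A_2/A_1$ and $A_1/A_2$, while $|N(u^*-\rho)/N(u^*+\rho)|=B$; this is \eqref{ineq-kappa}. The two bounds are consistent, $A_2/A_1\le1\le A_1/A_2$, because $A_1\ge A_2$: setting $p:=(1-\alpha)(1+\beta)$ and $q:=(1+\alpha)(1-\beta)$ one has $A_1^4-A_2^4=(p^{1/4}-q^{1/4})^4\ge0$, with equality only in the single-interval limit $\alpha=\beta$, where both bounds collapse onto the exact value.

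The main obstacle is twofold. The analytic heart is the theta--elliptic inequality of Section~6 together with the verification that its sharp constants are exactly the $A_1,A_2$ in \eqref{A}; everything rests on that estimate. The remaining difficulty is bookkeeping: showing that the elementary factors simplify to \eqref{B1} and \eqref{B2} requires lengthy manipulation of nested radicals, careful tracking of signs inside the absolute values, and, in case~(i), the $+\ii K'$ transformations for both $\sn$ and $\Theta$. As partial checks one can confirm that $\kappa(\hat E,\xi)\to0$ as $\xi\to\infty$ (then $u^*\to\rho$ and $H(0)=0$), in agreement with $B\to0$ in \eqref{B2}, and that the estimate degenerates to an identity as $\alpha\to\beta$.
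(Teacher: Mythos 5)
Your proposal follows essentially the same route as the paper: the same elliptic data $\sn^2(u^*)=\tfrac{(1+\xi)(1-\alpha)}{2(\xi-\alpha)}$, the same splitting $H=\sqrt{k}\,\sn\,\Theta$ (with the $+\ii K'$ translation reducing case (i) to a pure $\Theta$-ratio of real arguments), and the same key ingredient — the theta–elliptic inequality of Section\,6 (Lemma\,\ref{Lemma-IneqTheta}), whose approximant is $\tilde\Theta(u)\propto(\sqrt{k'}+\dn(u))^{-1}$ and whose constants have ratio exactly $A_1/A_2$ via \eqref{factor-k'}, so that the $\dn$-correction together with the $\sn$-ratio yields $B$. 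Your identity $A_1^4-A_2^4=\bigl(\sqrt[4]{(1-\alpha)(1+\beta)}-\sqrt[4]{(1+\alpha)(1-\beta)}\bigr)^4\ge0$ is a correct (and nice) consistency check not stated in the paper.
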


%----- Proof Theorem: Bounds for the convergence factor -----%

\begin{proof}
By \eqref{sn}, \eqref{cndn} and \eqref{sncndn}, the mapping $\varphi(u)$ in \eqref{phi} may be rewritten as
\begin{equation}\label{phi-1}
\varphi(u)=\alpha+\frac{1-\alpha^2}{2\sn^2(u)+\alpha^2-1}
\end{equation}
Let $u^*\in(0,K)\cup(\ii{K}',K+\ii{K}')$ be uniquely determined by the equation $\varphi(u^*)=\xi$. Note that
\begin{equation}\label{u*}
\begin{aligned}
\alpha<\xi<\beta&\iff{u}^*\in(\ii{K}',K+\ii{K}')\\
\xi\in(-\infty,-1)\cup(1,\infty)&\iff{u}^*\in(0,K)
\end{aligned}
\end{equation}
By \eqref{sn} and \eqref{phi-1}, $\varphi(u^*)=0$ is equivalent to
\begin{equation}\label{snu*}
\sn^2(u^*)=\frac{(1+\xi)(1-\alpha)}{2(\xi-\alpha)}=\frac{1+\xi}{\xi-\alpha}\,\sn^2(\rho).
\end{equation}
By \eqref{k}, \eqref{cndn}, \eqref{snu*} and \eqref{sncndn},
\begin{equation}\label{cnu*}
\cn^2(u^*)=\frac{(\xi-1)(1+\alpha)}{2(\xi-\alpha)}=\frac{\xi-1}{\xi-\alpha}\,\cn^2(\rho)
\end{equation}
and
\begin{equation}\label{dnu*}
\dn^2(u^*)=\frac{(\xi-\beta)(1+\alpha)}{(1+\beta)(\xi-\alpha)}=\frac{\xi-\beta}{\xi-\alpha}\,\dn^2(\rho).
\end{equation}
In order to obtain estimates for $\kappa(\hat{E},\xi)$, we will use the inequality
\begin{equation}\label{Theta}
\frac{\sqrt[4]{8(1+k')}\sqrt[8]{k'}}{1+\sqrt{k'}}
\leq\frac{\Theta(u-\rho)}{\Theta(u+\rho)}\cdot\frac{\sqrt{k'}+\dn(u-\rho)}{\sqrt{k'}+\dn(u+\rho)}\leq
\frac{1+\sqrt{k'}}{\sqrt[4]{8(1+k')}\sqrt[8]{k'}}
\end{equation}
which follows immediately from Lemma\,\ref{Lemma-IneqTheta}. By \eqref{k'}, straightforward computation gives
\begin{equation}\label{factor-k'}
\frac{1+\sqrt{k'}}{\sqrt[4]{8(1+k')}\sqrt[8]{k'}}=\frac{A_1}{A_2},
\end{equation}
where $A_1$ and $A_2$ are defined in \eqref{A}. Further, by
\cite[Eq.\,(123.01)]{BF},
\begin{equation}\label{dndn}
\begin{aligned}
&\frac{\sqrt{k'}+\dn(u+\rho)}{\sqrt{k'}+\dn(u-\rho)}\\
&=\frac{\sqrt{k'}(1-k^2\sn^2(u)\sn^2(\rho))+\dn(u)\dn(\rho)-k^2\sn(u)\sn(\rho)\cn(u)\cn(\rho)}
{\sqrt{k'}(1-k^2\sn^2(u)\sn^2(\rho))+\dn(u)\dn(\rho)+k^2\sn(u)\sn(\rho)\cn(u)\cn(\rho)}
\end{aligned}
\end{equation}
We consider the two cases $\alpha<\xi<\beta$ and $\xi\in\R\setminus[-1,1]$.
\begin{enumerate}
% Case 1.
\item[1.] $\alpha<\xi<\beta$.\\
By \eqref{u*}, $u^*=v^*+\ii{K}'$ with $0<v^*<K$. With the formula \cite{AS}
\[
H(u+\ii{K}')=\ii\exp(-\tfrac{\pi{K}'}{4K})\exp(-\tfrac{\ii\pi{u}}{2K})\,\Theta(u),
\]
we get
\begin{equation}\label{kappa-case1}
\begin{aligned}
\kappa(\hat{E},\xi)&=\Bigl|\frac{H(v^*-\rho+\ii{K}')}{H(v^*+\rho+\ii{K}')}\Bigr|
=\Bigl|\frac{\ii\exp(-\tfrac{\pi{K}'}{4K})\exp(-\tfrac{\ii\pi(v^*-\rho)}{2K})\,\Theta(v^*-\rho)}
{\ii\exp(-\tfrac{\pi{K}'}{4K})\exp(-\tfrac{\ii\pi(v^*+\rho)}{2K})\,\Theta(v^*+\rho)}\Bigr|\\
&=|\exp(\tfrac{\ii\pi\rho}{K})|\cdot\Bigl|\frac{\Theta(v^*-\rho)}{\Theta(v^*+\rho)}\Bigr|
=\frac{\Theta(v^*-\rho)}{\Theta(v^*+\rho)}.
\end{aligned}
\end{equation}
Thus, by \eqref{Theta} and \eqref{factor-k'},
\begin{equation}\label{ineq-case1}
\frac{A_2}{A_1}\cdot\frac{\sqrt{k'}+\dn(v^*+\rho)}{\sqrt{k'}+\dn(v^*-\rho)}\leq\kappa(\hat{E},\xi)
\leq\frac{A_1}{A_2}\cdot\frac{\sqrt{k'}+\dn(v^*+\rho)}{\sqrt{k'}+\dn(v^*-\rho)}
\end{equation}
By \cite[Eq.\,(122.07)]{BF}
\[
\sn^2(u^*)=\sn^2(v^*+\ii{K}')=\frac{1}{k^2\sn^2(v^*)}
\]
hence, by \eqref{k} and \eqref{snu*}--\eqref{dnu*}, we obtain the formulae
\begin{align}
\sn^2(v^*)&=\frac{1}{k^2\sn^2(u^*)}=\frac{(\xi-\alpha)(1+\beta)}{(1+\xi)(\beta-\alpha)},\label{snv*}\\
\cn^2(v^*)&=1-\sn^2(v^*)=\frac{(\beta-\xi)(1+\alpha)}{(1+\xi)(\beta-\alpha)},\label{cnv*}\\
\dn^2(v^*)&=1-k^2\sn^2(v^*)=\frac{(1-\xi)(1+\alpha)}{(1+\xi)(1-\alpha)}.\label{dnv*}
\end{align}
Starting from relation \eqref{dndn} with $u=v^*$ and using \eqref{k}--\eqref{cndn} and \eqref{snv*}--\eqref{dnv*}, we obtain
\[
\frac{\sqrt{k'}+\dn(v^*+\rho)}{\sqrt{k'}+\dn(v^*-\rho)}=B,
\]
where $B$ is defined in \eqref{B1}. Hence, inequality \eqref{ineq-kappa} follows by \eqref{ineq-case1}.
% Case 2.
\item[2.] $\xi\in\R\setminus[-1,1]$.\\
By \eqref{u*}, $0<u^*<K$. By Theorem\,\ref{Thm-kappa}, \eqref{H-H1-T1} and Lemma\,\ref{Lemma-Theta}\,(i),
\begin{equation}\label{kappa-case2}
\kappa(\hat{E},\xi)=\Bigl|\frac{H(u^*-\rho)}{H(u^*+\rho)}\Bigr|
=\Bigl|\frac{\sn(u^*-\rho)}{\sn(u^*+\rho)}\Bigr|\cdot\frac{\Theta(u^*-\rho)}{\Theta(u^*+\rho)}
\end{equation}
Thus, by \eqref{kappa-case2}, \eqref{Theta} and \eqref{factor-k'},
\begin{equation}\label{ineq-case2}
\frac{A_2}{A_1}\cdot\frac{\sqrt{k'}+\dn(u^*+\rho)}{\sqrt{k'}+\dn(u^*-\rho)}
\cdot\Bigl|\frac{\sn(u^*-\rho)}{\sn(u^*+\rho)}\Bigr|\leq\kappa(\hat{E},\xi)
\leq\frac{A_1}{A_2}\cdot\frac{\sqrt{k'}+\dn(u^*+\rho)}{\sqrt{k'}+\dn(u^*-\rho)}
\cdot\Bigl|\frac{\sn(u^*-\rho)}{\sn(u^*+\rho)}\Bigr|
\end{equation}
By the formulae for $\sn(u+v)$ and $\sn(u-v)$, see \cite[Eq.\,(123.01)]{BF}, together with \eqref{snu*}--\eqref{dnu*}, we get
\begin{equation}\label{snsn}
\begin{aligned}
\Bigl|\frac{\sn(u^*-\rho)}{\sn(u^*+\rho)}\Bigr|
&=\Bigl|\frac{\sn(u^*)\cn(\rho)\dn(\rho)-\sn(\rho)\cn(u^*)\dn(u^*)}
{\sn(u^*)\cn(\rho)\dn(\rho)+\sn(\rho)\cn(u^*)\dn(u^*)}\Bigr|\\
&=\frac{\bigl|\sqrt{(1+\xi)(\xi-\alpha)}-\sqrt{(\xi-1)(\xi-\beta)}\bigr|}
{\sqrt{(1+\xi)(\xi-\alpha)}+\sqrt{(\xi-1)(\xi-\beta)}}
\end{aligned}
\end{equation}
Starting from relation \eqref{dndn} with $u=u^*$ and using \eqref{k}--\eqref{cndn}, \eqref{snu*}--\eqref{dnu*} and \eqref{snsn}, we obtain
\[
\frac{\sqrt{k'}+\dn(u^*+\rho)}{\sqrt{k'}+\dn(u^*-\rho)}
\cdot\Bigl|\frac{\sn(u^*-\rho)}{\sn(u^*+\rho)}\Bigr|=B,
\]
where $B$ is defined in \eqref{B2}. Hence, inequality \eqref{ineq-kappa} follows by \eqref{ineq-case2}.
\end{enumerate}
\end{proof}

%----- Remark -----%

\begin{remark}
\begin{enumerate}
\item Let $-1<\alpha<\beta<1$. If $\{\alpha,\beta\}$ changes to $\{-\beta,-\alpha\}$, then, by \eqref{k}, the modulus $k$ does not change, and, by \eqref{sn}, $\rho$ changes to $K-\rho$. Thus, by \eqref{snu*},
\begin{equation}
\kappa([-1,\alpha]\cup[\beta,1],\xi)=\kappa([-1,-\beta]\cup[-\alpha,1],\tilde{\xi}),
\end{equation}
where $\tilde{\xi}$ satisfies the equation
\begin{equation}
\frac{(1+\xi)(1-\alpha)}{2(\xi-\alpha)}=\frac{(1+\tilde{\xi})(1+\beta)}{2(\tilde{\xi}+\beta)}.
\end{equation}
Hence, for the plots introduced in (ii), it remains to consider the case $\alpha\leq-\beta$ only.
\item In order to underline the goodness of the estimates for $\kappa(\hat{E},\xi)$ given in Theorem\,\ref{Thm-ineq-kappa}, let us present some plots, see Fig.\,\ref{Fig_AsymptoticFactor}. For the six cases $\{\alpha,\beta\}=\{-0.2,0.1\}$, $\{\alpha,\beta\}=\{-0.5,0.0\}$, $\{\alpha,\beta\}=\{-0.5,0.5\}$, $\{\alpha,\beta\}=\{-0.9,-0.3\}$, $\{\alpha,\beta\}=\{-0.9,0.5\}$, $\{\alpha,\beta\}=\{-0.9,0.9\}$, we have plotted the graph of $\kappa(\hat{E},\xi)$ (solid line),  the graph of the upper bound in \eqref{ineq-kappa} (dashed line), and the graph of the lower bound in \eqref{ineq-kappa} (dotted line) for $\alpha\leq\xi\leq\beta$. As one can see, the graphs match nearly perfectly, only if the length of the intervals $[-1,\alpha]$ and $[\beta,1]$ is very small, there is a visually recognizable difference between the bounds and the exact value $\kappa(\hat{E},\xi)$.
\end{enumerate}
\end{remark}

%----- Figure: Bounds for asymptotic factor -----%

\begin{figure}[ht]
\begin{center}
\includegraphics[scale=0.9]{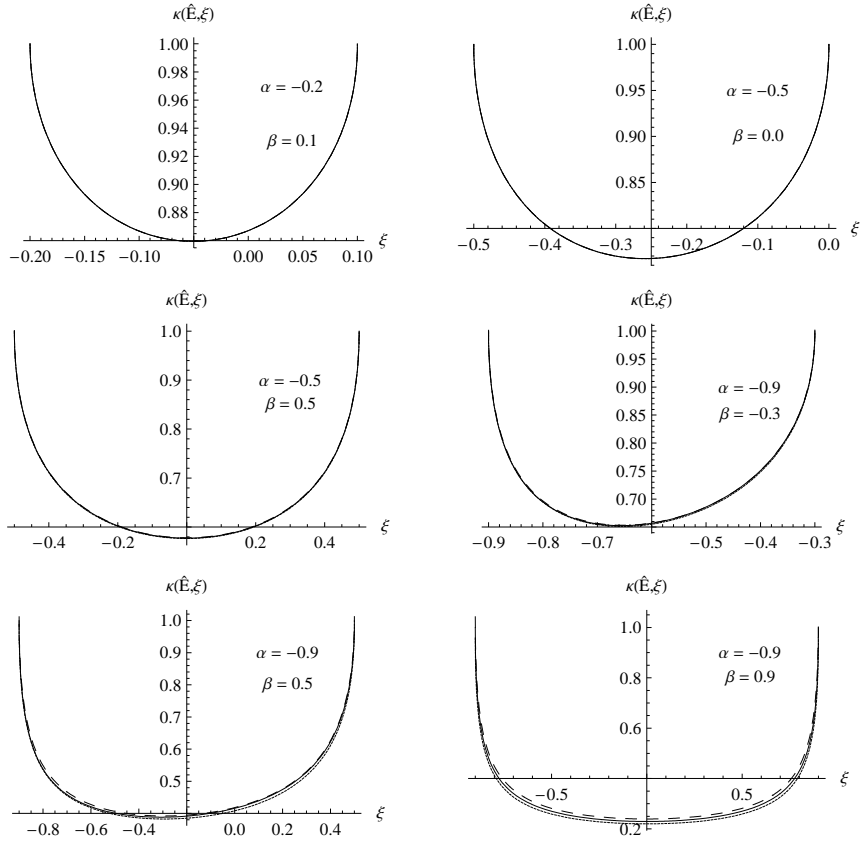}
\caption{\label{Fig_AsymptoticFactor} Plots of the graph of $\kappa(\hat{E},\xi)$ (solid line),  the graph of the upper bound in \eqref{ineq-kappa} (dashed line), and the graph of the lower bound in \eqref{ineq-kappa} (dotted line) for several values of $\alpha$ and $\beta$ and $\alpha\leq\xi\leq\beta$.}
\end{center}
\end{figure}

%----------------------------%
\section{An Extremum Problem}
%----------------------------%

%----- Text -----%

In this section, we completely solve the following problem: given the length of the two intervals, say $\ell_1$ and $\ell_2$, and given the length of the gap between the two intervals, say $\ell_3$, for which set of two intervals $E=[a_1,a_2]\cup[a_3,a_4]$ with $a_2-a_1=\ell_1$, $a_4-a_3=\ell_2$ and $a_3-a_2=\ell_3$, the convergence factor $\kappa(E,0)$ is minimal?

%----- Text -----%

For the linear transformed problem (see Section\,2), this problem reads as follows. Given $\hat{E}:=[-1,\alpha]\cup[\beta,1]$, $-1<\alpha<\beta<1$, for which $\xi\in(\alpha,\beta)$ the convergence factor $\kappa(\hat{E},\xi)$ is minimal? The answer gives the following theorem.

%----- Theorem: kappa -> min -----%

\begin{theorem}
Let $\hat{E}:=[-1,\alpha]\cup[\beta,1]$, $-1<\alpha<\beta<1$, and let $k\in(0,1)$ and $\rho\in(0,K)$ be given by \eqref{k} and \eqref{sn}, respectively. Then the convergence factor $\kappa(\hat{E},\xi)$, $\alpha<\xi<\beta$, is minimal for
\begin{equation}
\xi^*=\alpha+\zn(\rho)\sqrt{(1-\alpha)(1+\beta)}.
\end{equation}
\end{theorem}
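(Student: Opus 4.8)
The plan is to build directly on Case 1 of the proof of Theorem \ref{Thm-ineq-kappa}. There, for $\alpha<\xi<\beta$, one has $u^*=v^*+\ii K'$ with $0<v^*<K$ and, by \eqref{kappa-case1},
\[
\kappa(\hat{E},\xi)=\frac{\Theta(v^*-\rho)}{\Theta(v^*+\rho)}.
\]
Moreover, by \eqref{snv*}, $\sn^2(v^*)=\frac{(\xi-\alpha)(1+\beta)}{(1+\xi)(\beta-\alpha)}=1-\frac{1+\alpha}{1+\xi}\cdot\frac{1+\beta}{\beta-\alpha}$ is a strictly increasing function of $\xi$ on $(\alpha,\beta)$ mapping this interval bijectively onto $(0,1)$, so $v^*$ runs monotonically through $(0,K)$ as $\xi$ runs through $(\alpha,\beta)$. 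Hence minimizing $\kappa(\hat{E},\xi)$ over $\xi\in(\alpha,\beta)$ is equivalent to minimizing
\[
f(v):=\frac{\Theta(v-\rho)}{\Theta(v+\rho)},\qquad v\in(0,K).
\]

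First I would differentiate $\log f$ using the identity $\frac{d}{du}\log\Theta(u)=\zn(u)$ (the defining relation of the Jacobi zeta function), which gives
\[
\frac{f'(v)}{f(v)}=\zn(v-\rho)-\zn(v+\rho).
\]
Setting $f'(v)=0$ yields the critical equation $\zn(v+\rho)=\zn(v-\rho)$. To solve it I would apply the addition theorem $\zn(u\pm\rho)=\zn(u)\pm\zn(\rho)\mp k^2\sn(u)\sn(\rho)\sn(u\pm\rho)$ together with $\sn(v+\rho)+\sn(v-\rho)=\frac{2\sn(v)\cn(\rho)\dn(\rho)}{1-k^2\sn^2(v)\sn^2(\rho)}$; this reduces the critical equation to the single relation
\[
\zn(\rho)\bigl(1-k^2\sn^2(v)\sn^2(\rho)\bigr)=k^2\sn^2(v)\,\sn(\rho)\cn(\rho)\dn(\rho),
\]
which is linear in $\sn^2(v)$ and therefore has exactly one solution on $(0,K)$.

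The final step is to translate this relation into a condition on $\xi$. Substituting $k^2$, $\sn^2\rho$, $\cn^2\rho$, $\dn^2\rho$ from \eqref{k}, \eqref{sn}, \eqref{cndn} and $\sn^2(v^*)$ from \eqref{snv*}, the factor $1-k^2\sn^2(v^*)\sn^2(\rho)$ collapses to $\frac{1+\alpha}{1+\xi}$, while the right-hand side uses $\sn(\rho)\cn(\rho)\dn(\rho)=\frac{1+\alpha}{2}\sqrt{\frac{1-\alpha}{1+\beta}}$; after cancellation the critical equation becomes the clean relation
\[
\zn(\rho)=\frac{\xi-\alpha}{\sqrt{(1-\alpha)(1+\beta)}},
\]
which is exactly $\xi=\xi^*$. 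To confirm that $\xi^*$ is a minimum, I would observe that $f(0)=f(K)=1$ (since $\Theta$ is even and $2K$-periodic, so $\Theta(K+\rho)=\Theta(K-\rho)$), whereas $\kappa(\hat{E},\xi)=\exp(-g(\xi;\hat{E}\cc,\infty))<1$ throughout the interior; as the critical point is unique, it must be the global minimum, and in particular $\xi^*\in(\alpha,\beta)$.

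I expect the main obstacle to be purely the bookkeeping in this last substitution: one must extract the correct positive square root for $\sn(\rho)\cn(\rho)\dn(\rho)$ and track the cancellation of the several algebraic factors so that the relation indeed reduces to a linear equation in $\xi$. The analytic ingredients themselves — the logarithmic derivative $\Theta'/\Theta=\zn$ and the addition theorem for $\zn$ — are standard elliptic-function identities, so the genuine work lies in the algebraic simplification and in verifying that the resulting unique critical value lies in the open interval $(\alpha,\beta)$.
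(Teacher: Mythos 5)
Your proposal is correct and, at its core, follows the same route as the paper: reduce to minimizing $f(v)=\Theta(v-\rho)/\Theta(v+\rho)$ over $(0,K)$ via \eqref{kappa-case1} and \eqref{snv*}, obtain the critical equation $\zn(\rho)\bigl[1-k^2\sn^2(v)\sn^2(\rho)\bigr]=k^2\sn^2(v)\sn(\rho)\cn(\rho)\dn(\rho)$, and translate it back into $\xi$ using \eqref{k}--\eqref{cndn} and \eqref{snv*} (your intermediate quantities $\tfrac{1+\alpha}{1+\xi}$ and $\tfrac{(\xi-\alpha)(1+\alpha)}{(1+\xi)\sqrt{(1-\alpha)(1+\beta)}}$ are exactly the paper's). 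Your derivation of $f'/f=\zn(v-\rho)-\zn(v+\rho)$ from $\Theta'/\Theta=\zn$ plus the addition theorem for $\zn$ is precisely the content and proof of the paper's Lemma\,\ref{Lemma-dTheta}, so you have simply inlined that lemma. The one genuine difference is how the critical point is certified as the minimum: the paper invokes the convexity result $f''(u)>0$ on $(0,K)$ from an external reference (\cite{Sch-2005}), whereas you argue elementarily that $f(0)=f(K)=1$, that $\kappa=\exp(-g)<1$ in the interior by positivity of the Green's function, and that the critical equation, being linear in $\sn^2(v)$, admits at most one solution -- so the interior global minimum (which exists and is a critical point) must be that unique critical point. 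This buys self-containedness at no real cost; note only that linearity alone gives \emph{at most} one critical point, with existence supplied by your boundary-value argument, so the two halves of your closing paragraph should be presented in that order. Two small repairs: the coefficient $k^2\bigl[\sn(\rho)\cn(\rho)\dn(\rho)+\zn(\rho)\sn^2(\rho)\bigr]$ is nonzero because $\zn(\rho)=\Theta'(\rho)/\Theta(\rho)>0$ on $(0,K)$ (Lemma\,\ref{Lemma-Theta}\,(ii)), which is worth saying when you solve the linear equation; and your rewriting of \eqref{snv*} should read $\sn^2(v^*)=\frac{1+\beta}{\beta-\alpha}\bigl(1-\frac{1+\alpha}{1+\xi}\bigr)$ rather than $1-\frac{1+\alpha}{1+\xi}\cdot\frac{1+\beta}{\beta-\alpha}$ -- the monotonicity conclusion you draw from it is unaffected.
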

\begin{proof}
Let $f(u):=\Theta(u-\rho)/\Theta(u+\rho)$. In \cite{Sch-2005}, it is proved that $f''(u)>0$, $0<u<K$, with $f(0)=f(1)=1$. By \eqref{kappa-case1}, $\kappa(\hat{E},\xi)=f(v^*)$, where $v^*$ is uniquely determined by \eqref{snv*}. By Lemma\,\ref{Lemma-dTheta},
\begin{equation}\label{dTheta=0}
f'(v^*)=0\iff\zn(\rho)\bigl[1-k^2\sn^2(v^*)\sn^2(\rho)\bigr]=k^2\sn^2(v^*)\sn(\rho)\cn(\rho)\dn(\rho).
\end{equation}
By \eqref{k}, \eqref{sn}, \eqref{cndn} and \eqref{snv*},
\[
1-k^2\sn^2(v^*)\sn^2(\rho)=\frac{1+\alpha}{1+\xi}
\]
and
\[
k^2\sn^2(v^*)\sn(\rho)\cn(\rho)\dn(\rho)=\frac{(\xi-\alpha)(1+\alpha)}{(1+\xi)\sqrt{(1-\alpha)(1+\beta)}}.
\]
Thus, by \eqref{dTheta=0},
\begin{align*}
f'(v^*)=0&\iff\frac{1+\alpha}{1+\xi}\cdot\zn(\rho)=\frac{(\xi-\alpha)(1+\alpha)}{(1+\xi)\sqrt{(1-\alpha)(1+\beta)}}\\
&\iff\xi=\alpha+\zn(\rho)\sqrt{(1-\alpha)(1+\beta)}.
\end{align*}
\end{proof}

%-------------------------------------------------------------%
\section{Bounds for the Logarithmic Capacity of Two Intervals}
%-------------------------------------------------------------%

%----- Theorem: Lower bound for cap(E) -----%

\begin{theorem}\label{Thm_LB}
Let $\hat{E}:=[-1,\alpha]\cup[\beta,1]$, $-1<\alpha\leq\beta<1$,
then
\begin{equation}\label{LB-cap}
\CAP\hat{E}\geq\frac{1}{2}\Biggl(\frac{\sqrt[4]{1-\alpha^2}+\sqrt[4]{1-\beta^2}}
{\sqrt[4]{(1-\alpha)(1+\beta)}+\sqrt[4]{(1+\alpha)(1-\beta)}}\Biggr)^4=:C_1,
\end{equation}
where equality is attained if $\alpha=\beta$ or if $\alpha\to-1$ ($\beta$ fixed) or if $\beta\to1$ ($\alpha$ fixed).
\end{theorem}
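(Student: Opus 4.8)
The plan is to combine Achieser's closed form for the capacity (Theorem~\ref{Thm-AchieserCap}) with the theta--elliptic inequality of Section~6, applied at the single point $u=\rho$. By \eqref{cap1} we have
\[
\CAP\hat{E}=\frac{1+\beta}{2(1+\alpha)}\cdot\frac{\Theta^4(0)}{\Theta^4(\rho)},
\]
so it suffices to bound $\Theta(0)/\Theta(\rho)$ from below. The key input is the single--variable estimate behind Lemma~\ref{Lemma-IneqTheta} (the right-hand inequality underlying \eqref{Theta}), namely $\frac{\Theta(u)}{\Theta(0)}\bigl(\sqrt{k'}+\dn(u)\bigr)\le 1+\sqrt{k'}$ for $u\in[0,K]$, the maximum $1+\sqrt{k'}$ being attained at $u=0$. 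Evaluating at $u=\rho$ and rearranging yields
\[
\frac{\Theta(0)}{\Theta(\rho)}\ge\frac{\sqrt{k'}+\dn(\rho)}{1+\sqrt{k'}}.
\]

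Next I would pass everything to elementary functions of $\alpha,\beta$. Using $\dn^2(\rho)=\tfrac{1+\alpha}{1+\beta}$ from \eqref{cndn} and the value of $k'$ in \eqref{k'}, a short manipulation (factoring out $(1+\alpha)^{1/4}(1+\beta)^{-1/4}$ and putting the remainder over the common denominator $(1-\alpha)^{1/4}(1+\beta)^{1/4}$) gives
\[
\sqrt{k'}+\dn(\rho)=\frac{(1+\alpha)^{1/4}\bigl(\sqrt[4]{1-\alpha^2}+\sqrt[4]{1-\beta^2}\bigr)}{\sqrt[4]{1-\alpha}\,\sqrt{1+\beta}}.
\]
For the denominator I would record the identity $1+\sqrt{k'}=A_1\big/\sqrt[4]{(1-\alpha)(1+\beta)}$, which follows from \eqref{k'} because $\sqrt{k'}=\sqrt[4]{(1+\alpha)(1-\beta)}\big/\sqrt[4]{(1-\alpha)(1+\beta)}$ and $A_1$ is defined in \eqref{A} (this is the content of \eqref{factor-k'}). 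Dividing the two displays, the factors $\sqrt[4]{1-\alpha}$ and $\sqrt{1+\beta}$ collapse and
\[
\frac{\sqrt{k'}+\dn(\rho)}{1+\sqrt{k'}}=\frac{(1+\alpha)^{1/4}\bigl(\sqrt[4]{1-\alpha^2}+\sqrt[4]{1-\beta^2}\bigr)}{(1+\beta)^{1/4}\,A_1}.
\]

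Finally I would raise this to the fourth power and multiply by the prefactor $\tfrac{1+\beta}{2(1+\alpha)}$ from \eqref{cap1}; the factors $(1+\alpha)$ and $(1+\beta)$ cancel exactly, leaving $\CAP\hat{E}\ge\tfrac12\bigl(\sqrt[4]{1-\alpha^2}+\sqrt[4]{1-\beta^2}\bigr)^4\big/A_1^4=C_1$, which is precisely \eqref{LB-cap}. For the equality statement I would trace back to the equality conditions of Lemma~\ref{Lemma-IneqTheta}: the bound becomes an identity exactly when $\rho$ reaches an endpoint of $[0,K]$ or when the underlying function degenerates to a constant. By \eqref{sn}, $\rho\to K$ as $\alpha\to-1$; $k\to0$ (whence $\dn\equiv1$ and $\Theta$ is constant) as $\alpha=\beta$; and $\beta\to1$ forces $k'\to0$, $K\to\infty$ with $\rho$ bounded. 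In each case I would confirm $C_1\to\CAP\hat{E}$ by direct substitution into \eqref{LB-cap} (for instance $C_1\to\tfrac12$ when $\alpha=\beta$ and $C_1\to\tfrac{1+\alpha}{4}$ when $\beta\to1$).

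The bulk of the effort is the algebraic reduction of $\sqrt{k'}+\dn(\rho)$ and $1+\sqrt{k'}$ to the symmetric fourth-root expressions appearing in $A_1$ and in the numerator of $C_1$, together with checking that the prefactor cancels cleanly after taking fourth powers. The genuinely delicate point is the equality analysis in the degenerate limits, especially $\beta\to1$, where both $k'\to0$ and $K\to\infty$: there the two-sided bounds of Lemma~\ref{Lemma-IneqTheta} spread apart, and one must argue that the extremal value of $\tfrac{\Theta(u)}{\Theta(0)}(\sqrt{k'}+\dn(u))$ is nevertheless approached at $u=\rho$.
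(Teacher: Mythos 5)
Your proposal is correct and follows essentially the same route as the paper: Achieser's formula \eqref{cap1} combined with the right-hand inequality of Lemma~\ref{Lemma-IneqTheta} evaluated at $u=\rho$, followed by the algebraic reduction via \eqref{k'} and \eqref{cndn} (your intermediate identities for $\sqrt{k'}+\dn(\rho)$ and $1+\sqrt{k'}$ check out), with the equality cases verified by direct substitution into $C_1$ exactly as in the paper. The only difference is that your closing worry about tracing equality through the lemma in the limit $\beta\to1$ is unnecessary, since the direct computation of $\lim C_1$ against the capacity of the limiting single interval already settles it.
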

\begin{proof}
Let $-1<\alpha<\beta<1$ be given, and let $k\in(0,1)$ and $\rho\in(0,K)$ be given by \eqref{k} and \eqref{sn}, respectively. By Theorem\,\ref{Thm-AchieserCap} and Lemma\,\ref{Lemma-IneqTheta},
\[
\CAP\hat{E}=\frac{1+\beta}{2(1+\alpha)}\cdot\frac{\Theta^4(0)}{\Theta^4(\rho)}
\geq\frac{1+\beta}{2(1+\alpha)}\Bigl(\frac{\sqrt{k'}+\dn(\rho)}{1+\sqrt{k'}}\Bigr)^4.
\]
Using \eqref{k'} and \eqref{cndn}, inequality \eqref{LB-cap} follows. Concerning the cases of equality: If $\alpha=\beta$, then, for $C_1$ in \eqref{LB-cap}, we have $C_1=1/2=\CAP[-1,1]$. Further, for fixed $\beta$, $\lim_{\alpha\to-1}C_1=(1-\beta)/4=\CAP[\beta,1]$ and, for fixed $\alpha$, $\lim_{\beta\to1}C_1=(1+\alpha)/4=\CAP[-1,\alpha]$.
\end{proof}

%----- Remark -----%

\begin{remark}
\begin{enumerate}
\item In \cite{Solynin}, A.Yu.\,Solynin gave an excellent lower bound for the logarithmic capacity of the union of several intervals, see also \cite{Sch-2008-1} and \cite{Sch-2008-2} for a discussion of this result. Although we could not achieve the goodness of Solynin's bound in the two interval case, we found it useful to give this very simple lower bound \eqref{LB-cap}.
\item In the recent paper \cite{DubininKarp}, Dubinin and Karp even improved Solynin's lower bound and, in addition, based on a result of Haliste\,\cite{Haliste}, they gave an upper bound for the logarithmic capacity of several intervals. For the two intervals case, the result reads as follows.
\end{enumerate}
\end{remark}

%----- Theorem: Upper bound for cap(E) (Dubinin & Karp) -----%

\begin{theorem}[Dubinin\,\&\,Karp\,\cite{DubininKarp}]\label{Thm-UBcap}
Let $\hat{E}:=[-1,\alpha]\cup[\beta,1]$, $-1<\alpha<\beta<1$, then
\begin{equation}\label{UB-cap}
\CAP\hat{E}\leq\tfrac{1}{4}\Bigl(\sqrt{(1+\alpha)(1+\beta)}+\sqrt{(1-\alpha)(1-\beta)}\Bigr),
\end{equation}
where equality is attained if $\alpha=\beta$ or if $\alpha=-\beta$.
\end{theorem}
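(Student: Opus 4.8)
The plan is to reduce \eqref{UB-cap} to a single sharp inequality between Jacobi's theta and elliptic functions, in the same spirit as the proof of Theorem~\ref{Thm_LB}. Keeping $k,k'$ and $\rho$ as in \eqref{k}, \eqref{k'} and \eqref{sn}, I first rewrite both sides of \eqref{UB-cap} in terms of $\rho$. From \eqref{cndn} one has $1+\alpha=2\cn^2(\rho)$, $1-\alpha=2\sn^2(\rho)$, $1+\beta=2\cn^2(\rho)/\dn^2(\rho)$ and $1-\beta=2k'^2\sn^2(\rho)/\dn^2(\rho)$, whence
\[
\sqrt{(1+\alpha)(1+\beta)}=\frac{2\cn^2(\rho)}{\dn(\rho)},\qquad
\sqrt{(1-\alpha)(1-\beta)}=\frac{2k'\sn^2(\rho)}{\dn(\rho)}.
\]
Using in addition $\CAP\hat{E}=\frac{1}{2\dn^2(\rho)}\,\Theta^4(0)/\Theta^4(\rho)$, which is Achieser's formula \eqref{cap1} together with $\dn^2(\rho)=(1+\alpha)/(1+\beta)$, the asserted bound \eqref{UB-cap} is seen to be equivalent to the one-variable inequality
\begin{equation}\label{theta-UB}
\frac{\Theta^4(0)}{\Theta^4(\rho)}\le\dn(\rho)\bigl(\cn^2(\rho)+k'\sn^2(\rho)\bigr),\qquad 0<\rho<K.
\end{equation}

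Before attacking \eqref{theta-UB} I would record the equality cases, both as a check and to guide the argument. At $\rho=K/2$ one has $\sn^2=1/(1+k')$, $\cn^2=k'/(1+k')$ and $\dn^2=k'$, so the right-hand side of \eqref{theta-UB} equals $2k'^{3/2}/(1+k')$; on the other hand the symmetric set $\alpha=-\beta$ corresponds exactly to $\rho=K/2$, and a direct computation of $\Theta^4(0)/\Theta^4(K/2)$ (via $\CAP([-1,-\beta]\cup[\beta,1])=\tfrac12\sqrt{1-\beta^2}$) gives the same value, so \eqref{theta-UB} holds with equality. As $k\to0$ (that is, $\alpha\to\beta$) both sides tend to $1$, which is the remaining equality case. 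Moreover both the capacity and the right-hand side of \eqref{UB-cap} are unchanged under the reflection $\{\alpha,\beta\}\mapsto\{-\beta,-\alpha\}$, which fixes $k$ and replaces $\rho$ by $K-\rho$ (Remark after Theorem~\ref{Thm-ineq-kappa}); hence \eqref{theta-UB} is invariant under $\rho\mapsto K-\rho$, and it suffices to treat $0<\rho\le K/2$.

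To prove \eqref{theta-UB} I would set $\psi(\rho):=\Theta^4(0)\big/\bigl[\Theta^4(\rho)\dn(\rho)(\cn^2(\rho)+k'\sn^2(\rho))\bigr]$ and show $\psi\le1$ on $(0,K)$. Differentiating $\log\psi$ and using $\Theta'(\rho)/\Theta(\rho)=\zn(\rho)$ together with the derivatives of $\sn,\cn,\dn$ yields
\[
\frac{\psi'(\rho)}{\psi(\rho)}=-4\zn(\rho)+\frac{k^2\sn(\rho)\cn(\rho)}{\dn(\rho)}
+\frac{2(1-k')\sn(\rho)\cn(\rho)\dn(\rho)}{\cn^2(\rho)+k'\sn^2(\rho)}.
\]
The task is then to show that this expression, combined with the boundary data $\psi(0^+)=\psi(K/2)=1$ and the symmetry about $K/2$, forces $\psi\le1$; concretely I expect $\log\psi$ to be strictly concave on $(0,K/2)$ with a single interior maximum, so that the three coincidences $\psi(0)=\psi(K/2)=\psi(K)=1$ pin it below $1$ in between.

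The main obstacle is exactly this sign analysis, and it cannot be reduced to Lemma~\ref{Lemma-IneqTheta}. The upper estimate extracted from that lemma reads $\Theta^4(0)/\Theta^4(\rho)\le(\sqrt{k'}+\dn(\rho))^4\big/\bigl(8(1+k')\sqrt{k'}\bigr)$, whose right-hand side agrees with that of \eqref{theta-UB} at $\rho=K/2$ but, as $\rho\to0$, tends to $(1+\sqrt{k'})^4\big/\bigl(8(1+k')\sqrt{k'}\bigr)\ge1$, strictly exceeding the value $1$ taken there by the right-hand side of \eqref{theta-UB}; this gap is precisely why \eqref{UB-cap} improves the author's own estimate, and it forces one to prove the sharp inequality \eqref{theta-UB} directly, resting on the monotonicity of $\zn$ (via $\zn'(u)=\dn^2(u)-E/K$) that governs the logarithmic derivative above. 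As an alternative that avoids theta functions altogether, one may follow the original route of Dubinin and Karp~\cite{DubininKarp}: deduce \eqref{UB-cap} from Haliste's symmetrization estimate~\cite{Haliste} by showing that balancing $\hat{E}$ to the symmetric condenser $[-1,-\gamma]\cup[\gamma,1]$ with $\gamma^2=\tfrac12\bigl(1-\alpha\beta-\sqrt{(1-\alpha^2)(1-\beta^2)}\bigr)$ does not decrease the capacity, and then invoking $\CAP([-1,-\gamma]\cup[\gamma,1])=\tfrac12\sqrt{1-\gamma^2}$, which equals the right-hand side of \eqref{UB-cap}; there the crux is identifying the conformal invariant preserved by the symmetrization.
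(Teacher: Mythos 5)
Your reduction is sound: using \eqref{cap1}, \eqref{k'} and \eqref{cndn}, the bound \eqref{UB-cap} is indeed equivalent to the one-variable inequality $\Theta^4(0)/\Theta^4(\rho)\le\dn(\rho)\bigl(\cn^2(\rho)+k'\sn^2(\rho)\bigr)$ for $0<\rho<K$, and your equality checks at $\rho=K/2$ and $k\to0$ are correct. But note that this equivalence is exactly the content of the paper's Corollary, inequality \eqref{IneqTheta-1}, which the paper derives \emph{from} Theorem~\ref{Thm-UBcap}; the paper itself offers no proof of the theorem at all — it is quoted from Dubinin and Karp. So your program stands or falls with an independent proof of the theta inequality, and that is precisely what your proposal does not contain.

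Moreover, the route you sketch for it cannot work as stated. With $\psi(\rho):=\Theta^4(0)\big/\bigl[\Theta^4(\rho)\dn(\rho)(\cn^2(\rho)+k'\sn^2(\rho))\bigr]$, you need $\log\psi\le0$ on $(0,K)$ given that $\log\psi$ vanishes at $\rho=0$, $K/2$, $K$. If $\log\psi$ were strictly concave on $(0,K/2)$, then, vanishing at both endpoints of that interval, it would be strictly \emph{positive} inside, i.e.\ $\psi>1$ — the opposite of the claim; likewise a concave arc with a ``single interior maximum'' above two boundary values equal to $1$ lies above $1$, not below it. In fact, since equality in \eqref{UB-cap} for fixed $k\in(0,1)$ occurs only at $\alpha=-\beta$ (i.e.\ $\rho=K/2$), the function $\log\psi$ is strictly negative on $(0,K/2)\cup(K/2,K)$, hence has an interior \emph{minimum} in each half-interval and a smooth local maximum at $K/2$; its curvature necessarily changes sign on $(0,K/2)$, so no single convexity or concavity statement can pin it down, and the sign analysis you defer is exactly where the difficulty lives. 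Your observation that Lemma~\ref{Lemma-IneqTheta} is too weak is correct — its upper bound tends to $(1+\sqrt{k'})^4/\bigl(8(1+k')\sqrt{k'}\bigr)>1$ as $\rho\to0$, exceeding the required value $1$ — and this is presumably why the paper cites the result rather than reproving it. But your fallback, ``follow Dubinin and Karp via Haliste's symmetrization,'' is a citation rather than a proof (the key symmetrization step is left unverified), which is no more than the paper itself does. As it stands, the central inequality remains unproven.
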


%----- Remark -----%

\begin{remark}
\begin{enumerate}
\item Numerical computations show that the upper bound in \eqref{UB-cap} is excellent if the modulus $k$ defined in \eqref{k} is not too large. If the modulus $k$ is near to $1$, i.e., if, for fixed $\alpha$, the endpoint $\beta$ is near $1$, then the upper bound derived in \cite{Sch-2008-1} is better (i.e.\ smaller) than that of \eqref{UB-cap}.
\item In Fig.\,\ref{Fig_LogCapacity}, for $\alpha\in\{-0.8,-0.3,0.3,0.8\}$ and $\alpha\leq\beta\leq1$, we have plotted the graph of $\CAP\hat{E}$ (solid line), the graph of the lower bound \eqref{LB-cap} (dashed line), and the graph of the upper bound \eqref{UB-cap} (dotted line). As one can see, the upper bound matches nearly perfect whereas the lower bound is also quite good.
\item With the help of Lemma\,\ref{Lemma-IneqTheta} and analogously to the proof of Theorem\,\ref{Thm_LB}, it is also possible to obtain an upper bound for $\CAP\hat{E}$. Since from numerical computations it turns out that this upper bound is never better than the very simple upper bound \eqref{UB-cap}, we decided to skip it.
\end{enumerate}
\end{remark}

%----- Figure: Bounds for logarithmic capacity -----%

\begin{figure}[ht]
\begin{center}
\includegraphics[scale=0.9]{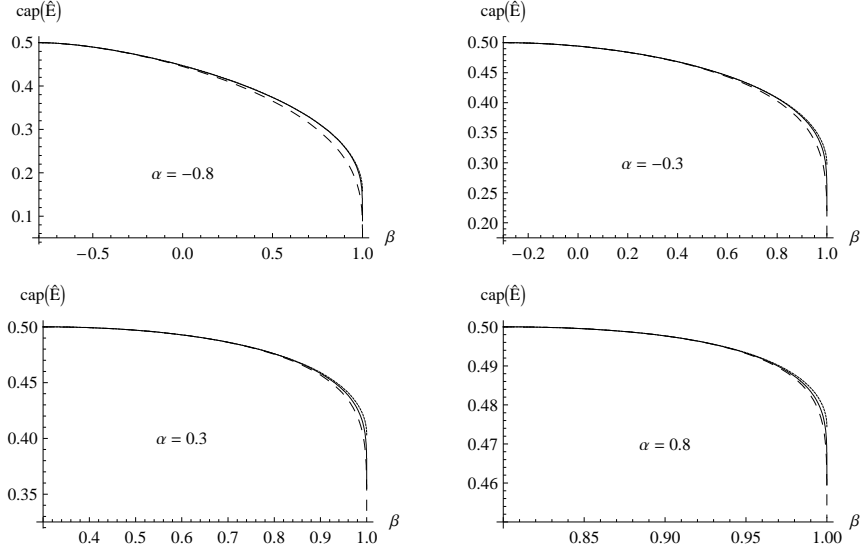}
\caption{\label{Fig_LogCapacity} Plots of the graph of $\CAP\hat{E}$ (solid line), the graph of the lower bound \eqref{LB-cap} (dashed line), and the graph of the upper bound \eqref{UB-cap} (dotted line) for several values of $\alpha$ and $\alpha\leq\beta\leq1$.}
\end{center}
\end{figure}

%----- Text -----%

With the help of Theorem\,\ref{Thm-UBcap}, we get a very accurate inequality for $\Theta(u)/\Theta(0)$.

%----- Corollary: Lower bound for Theta-function -----%

\begin{corollary}
For $0<k<1$ and $0\leq{u}\leq{K}$
\begin{equation}\label{IneqTheta-1}
\frac{\Theta^4(u)}{\Theta^4(0)}\geq\frac{1}{\dn(u)(\cn^2(u)+k'\sn^2(u))},
\end{equation}
where equality is attained if $u=0$ or if $u=\frac{1}{2}K$ or if
$u=K$ or if $k\to0$.
\end{corollary}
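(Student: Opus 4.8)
The plan is to read the two theorems of Section\,5 as statements about $\Theta$ in disguise and simply eliminate the capacity between them. Concretely, Theorem\,\ref{Thm-AchieserCap} can be rearranged to
\[
\frac{\Theta^4(\rho)}{\Theta^4(0)}=\frac{1+\beta}{2(1+\alpha)}\cdot\frac{1}{\CAP\hat{E}},
\]
so the upper bound \eqref{UB-cap} for $\CAP\hat{E}$ turns directly into a \emph{lower} bound for $\Theta^4(\rho)/\Theta^4(0)$, namely
\[
\frac{\Theta^4(\rho)}{\Theta^4(0)}\geq\frac{2(1+\beta)}{(1+\alpha)\bigl(\sqrt{(1+\alpha)(1+\beta)}+\sqrt{(1-\alpha)(1-\beta)}\bigr)}.
\]
It then remains only to rewrite the right-hand side in terms of $\sn,\cn,\dn$ evaluated at $\rho$.

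The bridge is the parametrization furnished by \eqref{sn} and \eqref{cndn}, read in reverse. For a prescribed modulus $k\in(0,1)$ and a prescribed point $u\in(0,K)$, I would set $\alpha:=1-2\sn^2(u)$ and then solve \eqref{k} for $\beta$; a short computation gives $1+\alpha=2\cn^2(u)$, $1-\alpha=2\sn^2(u)$ and $1+\beta=2\cn^2(u)/\dn^2(u)$, whence $1-\beta=2k'^2\sn^2(u)/\dn^2(u)$ after using $\dn^2(u)-\cn^2(u)=k'^2\sn^2(u)$. One checks that $-1<\alpha<\beta<1$ holds precisely because $\sn(u)\in(0,1)$ and $k'^2<\dn^2(u)<1$ on $(0,K)$, so that the hypotheses of Theorems\,\ref{Thm-AchieserCap} and \ref{Thm-UBcap} are met and $\rho=u$ by construction. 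Substituting the four expressions, I expect the square roots to collapse neatly: $\sqrt{(1+\alpha)(1+\beta)}=2\cn^2(u)/\dn(u)$ and $\sqrt{(1-\alpha)(1-\beta)}=2k'\sn^2(u)/\dn(u)$ (all signs being correct since $\sn,\cn\geq0$ and $\dn>0$ on $[0,K]$), so their sum factors as $2\bigl(\cn^2(u)+k'\sn^2(u)\bigr)/\dn(u)$. Feeding this back, the factors of $\cn^2(u)$ and one power of $\dn(u)$ cancel, leaving exactly \eqref{IneqTheta-1} after renaming $\rho$ as $u$.

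The only genuine subtlety, and hence the place I would be most careful, is the boundary of the parameter range. Theorem\,\ref{Thm-UBcap} requires the \emph{strict} inequality $\alpha<\beta$, so the endpoints $u=0$ and $u=K$ are not covered directly and must be recovered by continuity; there the parametrization degenerates to $\alpha=\beta=\pm1$, and indeed both sides of \eqref{IneqTheta-1} reduce to the corresponding limits. I would also match the equality assertions against those of \eqref{UB-cap}: the value $u=\tfrac12K$ yields $\alpha=-\beta=-(1-k')/(1+k')$, i.e.\ the symmetric case $\alpha=-\beta$ of Theorem\,\ref{Thm-UBcap}, while $u=0$ and $u=K$ correspond to the collapsed case $\alpha=\beta$. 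Finally $k\to0$ forces both sides to $1$, since $\Theta$ becomes constant while $\dn(u)\to1$ and $\cn^2(u)+k'\sn^2(u)\to\cn^2(u)+\sn^2(u)=1$; this accounts for the last stated equality case and confirms that the inequality is sharp exactly where Dubinin and Karp's bound is.
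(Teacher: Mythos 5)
Your proposal is correct and follows essentially the same route as the paper's own proof: eliminating $\CAP\hat{E}$ between Achieser's formula (Theorem\,\ref{Thm-AchieserCap}) and the Dubinin--Karp bound (Theorem\,\ref{Thm-UBcap}), and translating $\alpha,\beta$ into $\sn,\cn,\dn$ at $\rho$ via \eqref{k}, \eqref{sn}, \eqref{cndn}. If anything, you are slightly more careful than the paper, since you make explicit the inverse parametrization $(k,u)\mapsto(\alpha,\beta)$ showing every pair with $u\in(0,K)$ is attained, and you handle the degenerate endpoints $u=0$, $u=K$ by continuity rather than leaving this implicit.
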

\begin{proof}
Let $-1<\alpha<\beta<1$ be fixed and let $k\in(0,1)$ and $\rho\in(0,K)$ be given by \eqref{k} and \eqref{sn}. By \eqref{k}, \eqref{sn}, and \eqref{cndn},
\[
\frac{1}{4}\Bigl(\sqrt{(1+\alpha)(1+\beta)}+\sqrt{(1-\alpha)(1-\beta)}\Bigr)=\frac{\cn^2(\rho)+k'\sn^2(\rho)}{2\,\dn(\rho)}
\]
which together with \eqref{cndn}, Theorem\,\ref{Thm-AchieserCap} and Theorem\,\ref{Thm-UBcap} gives
\[
\CAP\hat{E}=\frac{1}{2\dn^2(\rho)}\cdot\frac{\Theta^4(0)}{\Theta^4(\rho)}\leq\frac{\cn^2(\rho)+k'\sn^2(\rho)}{2\,\dn(\rho)}.
\]
The cases of equality follow immediately from \eqref{sncndn-K}, Lemma\,\ref{Lemma-ThetaK2} and Lemma\,\ref{Lemma-Theta}.
\end{proof}

%--------------------------------------------------------------------%
\section{Auxiliary Results for Jacobi's Elliptic and Theta Functions}
%--------------------------------------------------------------------%

%----- Text -----%

Let $k$, $0<k<1$, be the modulus of Jacobi's elliptic functions $\sn(u)\equiv\sn(u,k)$, $\cn(u)\equiv\cn(u,k)$, and $\dn(u)\equiv\dn(u,k)$, of Jacobi's theta functions $\Theta(u)\equiv\Theta(u,k)$, $H(u)\equiv{H}(u,k)$, $H_1(u)\equiv{H}_1(u,k)$, and $\Theta_1(u)\equiv\Theta_1(u,k)$, (Jacobi's old notation) and, finally, of Jacobi's zeta function, $\zn(u)\equiv\zn(u,k)$. Here we follow the notation of Carlson and Todd~\cite{CarlsonTodd}, in other references, like \cite{Lawden}, Jacobi's zeta function is denoted by $Z(u)$.

%----- Text -----%

Let $k':=\sqrt{1-k^2}$ be the complementary modulus, let $K\equiv{K}(k)$ be the complete elliptic integral of the first kind and let $K'\equiv{K}'(k):=K(k')$. Note that $K,K'\in\R^+$. Further let $q\equiv{q}(k):=\exp(-\pi{K'}/K)$ be the nome of Jacobi's theta functions.

%----- Text -----%

For the definitions and many important properties of Jacobi's elliptic and theta functions, we refer to \cite{BF}, \cite{Lawden} and \cite{AS}.

%----- Text -----%

Let us mention that there is a different notation of the four theta functions (e.g.\ in \cite{BF} and \cite{Lawden}) given by $\Theta(u,k)=\theta_0(v,q)=\theta_4(v,q)$, $H(u,k)=\theta_1(v,q)$, $H_1(u,k)=\theta_2(v,q)$ and $\Theta_1(u,k)=\theta_3(v,q)$, where instead of the parameter $k$ the parameter $q$ is used and $v=u\pi/(2K)$. Sometimes also the parameter $\tau=\ii{K}'/K$ is used.

%----- Text -----%

The main issue of this section is to derive an upper and a lower bound for the theta function $\Theta(u)$ in terms of Jacobi's elliptic function $\dn(u)$ and the modulus $k$, see Lemma\,\ref{Lemma-IneqTheta}. For this reason, we have to prove a sequence of several lemmas.

%----- Text -----%

Let us start by repeating some useful formulae. By \cite[Eq.\,(121.00)]{BF},
\begin{equation}\label{sncndn}
\sn^2(u)+\cn^2(u)=1,\qquad k^2\sn^2(u)+\dn^2(u)=1,
\end{equation}
and, by \cite[Eq.\,(1052.02)]{BF},
\begin{equation}\label{H-H1-T1}
H(u)=\sqrt{k}\,\sn(u)\,\Theta(u),\,H_1(u)=\tfrac{\sqrt{k}}{\sqrt{k'}}\,\cn(u)\,\Theta(u),\,
\Theta_1(u)=\tfrac{1}{\sqrt{k'}}\,\dn(u)\,\Theta(u),
\end{equation}
and, by \cite[Eq.\,(122.10)]{BF} and \cite[Eq.\,(3.6.2)]{Lawden},
\begin{equation}\label{sncndn-K}
\begin{aligned}
&\sn(0)=\zn(0)=0,\quad\cn(0)=1,\quad\dn(0)=1,\\
&\sn(K)=1,\quad\cn(K)=\zn(K)=0,\quad\dn(K)=k',\\
&\sn(\tfrac{1}{2}K)=\tfrac{1}{\sqrt{1+k'}},\,\cn(\tfrac{1}{2}K)=\sqrt{\tfrac{k'}{1+k'}},
\,\dn(\tfrac{1}{2}K)=\sqrt{k'},\,\zn(\tfrac{1}{2}K)=\tfrac{1}{2}(1-k),
\end{aligned}
\end{equation}
Further, by \cite[Eq.\,(731.01)--(731.03)]{BF} and \cite[Eqs.\,(3.4.25)~and~(3.6.1)]{Lawden},
\begin{equation}\label{d-sncndnzn}
\begin{aligned}
\frac{\partial}{\partial{u}}\{\sn(u)\}=\cn(u)\dn(u),\qquad
\frac{\partial}{\partial{u}}\{\cn(u)\}=-\sn(u)\dn(u),\\
\frac{\partial}{\partial{u}}\{\dn(u)\}=-k^2\sn(u)\cn(u),\qquad
\frac{\partial}{\partial{u}}\{\zn(u)\}=\dn^2(u)-E/K,
\end{aligned}
\end{equation}
and, by \cite[Lem.\,4]{Sch-2005},
\begin{equation}\label{d-Theta}
\begin{aligned}
\frac{\partial}{\partial{u}}\{\Theta(u)\}=\Theta(u)\zn(u),\quad
\frac{\partial}{\partial{u}}\{\Theta_1(u)\}=\tfrac{1}{\sqrt{k'}}\,\Theta(u)\bigl(-k^2\sn(u)\cn(u)+\dn(u)\zn(u)\bigr).
\end{aligned}
\end{equation}

%----- Text -----%

Next, let us collect some basic properties of Jacobi's theta function $\Theta(u)$ in the following lemma.

%----- Lemma: Function Theta(u) -----%

\begin{lemma}\label{Lemma-Theta}
The function $\Theta(u)$ has the following properties:
\begin{enumerate}
\item $\Theta(u)>0$ for $u\in\R$ and $\Theta(u+2K)=\Theta(u)$ for $u\in\C$.
\item $\Theta(u)$ is strictly monotone increasing in $[0,K]$ and strictly monotone decreasing in $[K,2K]$.
\item $\Theta(0)\leq\Theta(u)\leq\Theta(K)$ for $u\in\R$.
\item $\Theta(0)=\Theta_1(K)=\sqrt{k'}\,\Theta(K)=\sqrt{k'}\,\Theta_1(0)=\sqrt{2k'K/\pi}$
\item For $k\to0$ there is $\Theta(u)\to1$, $u\in\C$.
\end{enumerate}
\end{lemma}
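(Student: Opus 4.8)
The plan is to obtain the two monotonicity statements (ii)--(iii) from the differential identity $\Theta'(u)=\Theta(u)\zn(u)$ in \eqref{d-Theta}, and to read off (i), (iv), (v) from the standard series and product representations of $\Theta$ together with the evaluation table \eqref{sncndn-K}.

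For (i), I would invoke the product form $\Theta(u)=\theta_4(v,q)$ with $v=\pi u/(2K)$, namely $\theta_4(v,q)=\prod_{n\ge1}(1-q^{2n})\bigl(1-2q^{2n-1}\cos 2v+q^{4n-2}\bigr)$. Each quadratic factor equals $(1-q^{2n-1})^2+2q^{2n-1}(1-\cos 2v)\ge(1-q^{2n-1})^2>0$ for $0<q<1$ and real $v$, while $\prod_{n\ge1}(1-q^{2n})>0$; hence $\Theta(u)>0$ for real $u$. The relation $\Theta(u+2K)=\Theta(u)$ is the $\pi$-periodicity of $\theta_4$ in $v$, a standard fact from \cite{BF,Lawden,AS}.

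The crux is (ii). Since $\Theta>0$ on $\R$ by (i), the identity $\Theta'(u)=\Theta(u)\zn(u)$ shows that $\Theta'$ has the same sign as the Jacobi zeta function $\zn$, so everything reduces to locating the sign of $\zn$. On $(0,K)$ I would argue as follows: by \eqref{sncndn-K} one has $\zn(0)=\zn(K)=0$, and by \eqref{d-sncndnzn} one has $\zn'(u)=\dn^2(u)-E/K$, where $E=E(k)$ is the complete elliptic integral of the second kind. Because $E=\int_0^K\dn^2(t)\,dt$, the constant $E/K$ is the mean value of $\dn^2$ over $[0,K]$, and since $(\dn^2)'(u)=-2k^2\sn(u)\cn(u)\dn(u)<0$ on $(0,K)$, the function $\dn^2$ decreases strictly from $\dn^2(0)=1$ to $\dn^2(K)=k'^2$. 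Hence $\zn'$ is strictly decreasing with $\zn'(0)>0$ and $\zn'(K)<0$, so it has a single sign change from $+$ to $-$; consequently $\zn$ rises then falls on $[0,K]$ while vanishing at both endpoints, forcing $\zn(u)>0$ on $(0,K)$ and thus $\Theta$ strictly increasing there. For $K<u<2K$ I would use that $\zn$ is odd and $2K$-periodic, so $\zn(u)=-\zn(2K-u)<0$ with $2K-u\in(0,K)$, whence $\Theta$ is strictly decreasing on $[K,2K]$. Property (iii) is then immediate: by $2K$-periodicity it suffices to consider $[0,2K]$, where (ii) places the minimum $\Theta(0)=\Theta(2K)$ at the endpoints and the maximum at $\Theta(K)$.

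For (iv), I would evaluate \eqref{H-H1-T1} at the special points in \eqref{sncndn-K}: at $u=0$, $\dn(0)=1$ gives $\Theta(0)=\sqrt{k'}\,\Theta_1(0)$, and at $u=K$, $\dn(K)=k'$ gives $\Theta_1(K)=\sqrt{k'}\,\Theta(K)$. The quarter-period shift $\Theta(u+K)=\Theta_1(u)$ (equivalently $\theta_4(v+\tfrac{\pi}{2})=\theta_3(v)$, immediate from the series) taken at $u=0$ gives $\Theta(K)=\Theta_1(0)$, and chaining these yields $\Theta(0)=\sqrt{k'}\,\Theta_1(0)=\sqrt{k'}\,\Theta(K)=\Theta_1(K)$. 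The numerical value then follows from the classical normalization $\Theta_1^2(0)=\theta_3^2(0,q)=2K/\pi$ from \cite{BF,Lawden,AS}, so that $\Theta(0)=\sqrt{k'}\,\Theta_1(0)=\sqrt{2k'K/\pi}$. Finally, for (v), as $k\to0$ one has $k'\to1$, $K\to\pi/2$, $K'\to\infty$, hence $q=\exp(-\pi K'/K)\to0$ and $v=\pi u/(2K)\to u$; in the series $\Theta(u)=\theta_4(v,q)=\sum_{n=-\infty}^{\infty}(-1)^n q^{n^2}\exp(2\ii nv)$ every term with $n\neq0$ tends to $0$, leaving $\Theta(u)\to1$ for each $u\in\C$. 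The only genuinely substantial step is the sign analysis of $\zn$ in (ii); the rest is bookkeeping with standard theta-function identities and the table \eqref{sncndn-K}.
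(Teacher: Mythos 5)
Your proof is correct, but note that the paper does not actually prove this lemma at all: it is stated as a collection of standard properties of $\Theta(u)$, implicitly deferred to the references \cite{BF}, \cite{Lawden}, \cite{AS} (and \cite{Sch-2005} for the derivative formula). So your contribution is a self-contained verification rather than an alternative to an existing argument. The substantive part, as you say, is (ii): your chain $\Theta'=\Theta\,\zn$, $\zn(0)=\zn(K)=0$, $\zn'(u)=\dn^2(u)-E/K$ with $E=\int_0^K\dn^2(t)\,dt$, together with the strict monotonicity of $\dn^2$, correctly forces $\zn'$ to change sign exactly once on $(0,K)$, whence $\zn>0$ there; the extension to $[K,2K]$ via oddness and $2K$-periodicity of $\zn$ is also right, and (iii) then follows as you state. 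Parts (i) and (iv) are clean bookkeeping with the product formula, \eqref{H-H1-T1}, \eqref{sncndn-K}, the quarter-period shift $\Theta(u+K)=\Theta_1(u)$ (which the paper itself uses in the proof of Lemma\,\ref{Lemma-Theta+Theta1}), and $\Theta_1^2(0)=2K/\pi$. Two small points worth tightening: in (v), for complex $u$ the term $|\exp(2\ii nv)|$ grows like $\exp(2|n|\,|\operatorname{Im}v|)$, so you should remark that the Gaussian factor $q^{n^2}$ dominates this growth, justifying the term-by-term limit (locally uniformly in $u$); and in (ii) it is worth one sentence to justify $k'^2<E/K<1$, namely that the mean of a strictly decreasing continuous function lies strictly between its extreme values. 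What your route buys is a paper that no longer leans on outside references for this lemma; what the paper's route buys is brevity, since all five items are classical.
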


%----- Text -----%

For the next lemma, see Lemma\,2 of \cite{Sch-2008-1}. Unfortunately, there is a misprint in the formula of $H(\tfrac{1}{2}K)$, which is here corrected.

%----- Lemma: Theta(K/2), H(K/2), H1(K/2), Theta1(K/2) -----%

\begin{lemma}\label{Lemma-ThetaK2}
Let $0<k<1$, then
\begin{equation}
\begin{aligned}
\Theta^4(\tfrac{1}{2}K)&=\Theta_1^4(\tfrac{1}{2}K)=\tfrac{2}{\pi^2}(1+k')\sqrt{k'}K^2,\\
H^4(\tfrac{1}{2}K)&=H_1^4(\tfrac{1}{2}K)=\tfrac{2}{\pi^2}(1-k')\sqrt{k'}K^2.
\end{aligned}
\end{equation}
\end{lemma}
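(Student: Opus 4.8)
The plan is to reduce the entire lemma to the single unknown $\Theta(\tfrac12 K)$, because the asserted equalities and the relation between $H^4$ and $\Theta^4$ come essentially for free from \eqref{H-H1-T1} evaluated at $u=\tfrac12 K$. Using the special values in \eqref{sncndn-K}, namely $\sn(\tfrac12 K)=1/\sqrt{1+k'}$, $\cn(\tfrac12 K)=\sqrt{k'/(1+k')}$ and $\dn(\tfrac12 K)=\sqrt{k'}$, I would first compute
\[
\Theta_1(\tfrac12 K)=\tfrac{1}{\sqrt{k'}}\,\dn(\tfrac12 K)\,\Theta(\tfrac12 K)=\Theta(\tfrac12 K),
\]
which gives $\Theta^4(\tfrac12 K)=\Theta_1^4(\tfrac12 K)$ at once, and likewise
\[
H(\tfrac12 K)=\sqrt{k}\,\sn(\tfrac12 K)\,\Theta(\tfrac12 K)=\tfrac{\sqrt{k}}{\sqrt{1+k'}}\,\Theta(\tfrac12 K)=\tfrac{\sqrt{k}}{\sqrt{k'}}\,\cn(\tfrac12 K)\,\Theta(\tfrac12 K)=H_1(\tfrac12 K),
\]
so that $H^4(\tfrac12 K)=H_1^4(\tfrac12 K)$. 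Squaring the expression for $H(\tfrac12 K)$ twice and using $k^2=1-k'^2=(1-k')(1+k')$ (from \eqref{sncndn}) yields the key proportionality
\[
H^4(\tfrac12 K)=\frac{k^2}{(1+k')^2}\,\Theta^4(\tfrac12 K)=\frac{1-k'}{1+k'}\,\Theta^4(\tfrac12 K).
\]

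It remains to pin down the absolute value of $\Theta^4(\tfrac12 K)$, and this is where the real work lies: the identities above only produce ratios, so one genuine normalization input is needed. I would supply it through the theta addition theorem $\Theta^2(0)\,\Theta(u+v)\,\Theta(u-v)=\Theta^2(u)\Theta^2(v)-H^2(u)H^2(v)$ (in the $\theta$-notation $\theta_4^2(0)\theta_4(u+v)\theta_4(u-v)=\theta_4^2(u)\theta_4^2(v)-\theta_1^2(u)\theta_1^2(v)$, see \cite{BF}), evaluated at $u=v=\tfrac12 K$. Since then $u+v=K$ and $u-v=0$, it collapses to
\[
\Theta^3(0)\,\Theta(K)=\Theta^4(\tfrac12 K)-H^4(\tfrac12 K).
\]
Here Lemma\,\ref{Lemma-Theta}\,(iv) furnishes exactly the two absolute normalizations I am missing, $\Theta(K)=\Theta(0)/\sqrt{k'}$ and $\Theta^2(0)=2k'K/\pi$, so that the left-hand side equals $\Theta^4(0)/\sqrt{k'}=4k'^{3/2}K^2/\pi^2$.

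Finally I would combine the two displays: substituting $H^4(\tfrac12 K)=\tfrac{1-k'}{1+k'}\Theta^4(\tfrac12 K)$ into the right-hand side gives $\Theta^4(\tfrac12 K)\bigl(1-\tfrac{1-k'}{1+k'}\bigr)=\tfrac{2k'}{1+k'}\Theta^4(\tfrac12 K)$, and solving
\[
\frac{2k'}{1+k'}\,\Theta^4(\tfrac12 K)=\frac{4k'^{3/2}K^2}{\pi^2}
\]
produces $\Theta^4(\tfrac12 K)=\tfrac{2}{\pi^2}(1+k')\sqrt{k'}K^2$, the claimed value; feeding this back into the $H^4$--$\Theta^4$ relation gives $H^4(\tfrac12 K)=\tfrac{2}{\pi^2}(1-k')\sqrt{k'}K^2$. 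I expect the main obstacle to be essentially bookkeeping: selecting the correct addition formula among the many variants in \cite{BF} and matching its sign convention and the dictionary $(H,H_1,\Theta,\Theta_1)\leftrightarrow(\theta_1,\theta_2,\theta_4,\theta_3)$, since an incorrect pairing (for instance $H_1$ in place of $H$, or a $+$ sign) would destroy the clean cancellation. The fact that the constants collapse exactly to the asserted $(1\pm k')$ forms is itself a strong consistency check on that choice.
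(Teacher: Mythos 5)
Your proof is correct, and the comparison here is somewhat unusual: the paper gives no proof of this lemma at all, but simply refers to Lemma\,2 of \cite{Sch-2008-1} (correcting a misprint there in the formula for $H(\tfrac{1}{2}K)$). Your argument is therefore a self-contained substitute for that citation, and every step checks out. The reductions via \eqref{H-H1-T1} and \eqref{sncndn-K} correctly give $\Theta_1(\tfrac{1}{2}K)=\Theta(\tfrac{1}{2}K)$ and $H_1(\tfrac{1}{2}K)=H(\tfrac{1}{2}K)=\sqrt{k}\,\Theta(\tfrac{1}{2}K)/\sqrt{1+k'}$, whence $H^4(\tfrac{1}{2}K)=\tfrac{1-k'}{1+k'}\,\Theta^4(\tfrac{1}{2}K)$ by $k^2=(1-k')(1+k')$. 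The normalization input is also sound: the addition theorem you quote, $\Theta^2(0)\,\Theta(u+v)\,\Theta(u-v)=\Theta^2(u)\Theta^2(v)-H^2(u)H^2(v)$, is (after \eqref{H-H1-T1}) exactly the standard relation $\Theta(u+v)\,\Theta(u-v)\,\Theta^2(0)=\Theta^2(u)\Theta^2(v)\bigl(1-k^2\sn^2(u)\sn^2(v)\bigr)$ found in \cite{BF} and \cite{Lawden}; at $u=v=\tfrac{1}{2}K$ the factor becomes $1-k^2/(1+k')^2=2k'/(1+k')$, matching your cancellation, and Lemma\,\ref{Lemma-Theta}\,(iv) supplies $\Theta(K)=\Theta(0)/\sqrt{k'}$ and $\Theta^2(0)=2k'K/\pi$, yielding $\Theta^4(\tfrac{1}{2}K)=\tfrac{2}{\pi^2}(1+k')\sqrt{k'}K^2$ and then $H^4(\tfrac{1}{2}K)=\tfrac{2}{\pi^2}(1-k')\sqrt{k'}K^2$ as claimed. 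What your route buys is that the lemma becomes verifiable entirely within the paper's own toolkit plus one standard theta identity; what the paper's citation buys is brevity and the avoidance of precisely the bookkeeping hazard you flag yourself (selecting the correct variant and sign convention of the addition formula among the many in \cite{BF}), since that work was already carried out in \cite{Sch-2008-1}.
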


%----- Lemma: Function zn(u)-... -----%

\begin{lemma}\label{Lemma-zn}
The function
\begin{equation}
f(u):=\zn(u)-\frac{k^2\sn(u)\cn(u)}{\sqrt{k'}+\dn(u)}
\end{equation}
has the following properties:
\begin{enumerate}
\item $f(0)=f(\frac{1}{2}K)=f(K)=0$
\item $f(u)<0$ for $0<u<\frac{1}{2}K$ and $f(u)>0$ for $\frac{1}{2}K<u<K$
\item $f''(0)=f''(\frac{1}{2}K)=f''(K)=0$
\item $f''(u)>0$ for $0<u<\frac{1}{2}K$ and $f''(u)<0$ for $\frac{1}{2}K<u<K$
\end{enumerate}
\end{lemma}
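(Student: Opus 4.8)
The engine of the whole proof is an explicit closed form for $f''$. The plan is to show, by direct differentiation and a (lengthy but ultimately collapsing) simplification, that
\[
f''(u)=\frac{k^2\sqrt{k'}\,(1-k')^2\,\sn(u)\,\cn(u)\,\bigl(\dn(u)-\sqrt{k'}\bigr)}{\bigl(\sqrt{k'}+\dn(u)\bigr)^3},\qquad 0<u<K.
\]
Once this identity is in hand, parts $(iii)$ and $(iv)$ can be read off at a glance, and parts $(i)$ and $(ii)$ follow with little extra work; so the substance of the proof is the computation and, above all, the simplification of $f''$.

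To obtain the formula I would write $s=\sn(u)$, $c=\cn(u)$, $d=\dn(u)$, $D=\sqrt{k'}+d$, and split $f=\zn-g$ with $g=k^2sc/D$. Differentiating once via \eqref{d-sncndnzn} gives $\zn'(u)=d^2-E/K$ together with a rational expression for $g'$ having denominator $D^2$. The key simplifying observation is that on the \emph{second} derivative the constant $E/K$ drops out, so that $f''=-2k^2scd-g''$, and after reducing to the common denominator $D^3$ one is left with $f''=\dfrac{k^2sc}{D^3}\,Q$, where $Q$ is a polynomial in $s,c,d,D$. I then eliminate $s,c$ in favour of $d$ through \eqref{sncndn}, in the form $\sn^2(u)=(1-d^2)/k^2$ and $\cn^2(u)=(d^2-k'^2)/k^2$, which turns $Q$ into a polynomial in $d$ with $D=\sqrt{k'}+d$. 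A straightforward expansion shows that the coefficients of $d^4$, $d^3$ and $d^2$ vanish identically, and that what remains is the single linear factor $Q=\sqrt{k'}\,(1-k')^2\,(d-\sqrt{k'})$. This threefold cancellation is the delicate and main point of the argument; carrying the algebra in the variable $d$ is precisely what makes the collapse to one linear factor transparent.

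With the closed form available the remaining parts are short. For $(iii)$ and $(iv)$ I note that on $[0,K]$ one has $\sn(u),\cn(u)\ge 0$ with $\sn(0)=\cn(K)=0$, while by \eqref{d-sncndnzn} and \eqref{sncndn-K} the function $\dn(u)$ decreases strictly from $1$ to $k'$ and equals $\sqrt{k'}$ exactly at $u=\tfrac12K$; hence the three factors $\sn(u)$, $\dn(u)-\sqrt{k'}$, $\cn(u)$ yield the zeros $f''(0)=f''(\tfrac12K)=f''(K)=0$, and the sign of $\dn(u)-\sqrt{k'}$ gives $f''>0$ on $(0,\tfrac12K)$ and $f''<0$ on $(\tfrac12K,K)$. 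For $(i)$, $f(0)=0$ and $f(K)=0$ follow by substituting $\sn(0)=\zn(0)=0$ and $\cn(K)=\zn(K)=0$, whereas $f(\tfrac12K)=0$ follows from the special values $\sn(\tfrac12K)=1/\sqrt{1+k'}$, $\cn(\tfrac12K)=\sqrt{k'/(1+k')}$, $\dn(\tfrac12K)=\sqrt{k'}$ together with $\zn(\tfrac12K)=\tfrac12(1-k')$; the last value is obtained from the duplication relation $2\zn(u)-\zn(2u)=k^2\sn^2(u)\sn(2u)$ at $u=\tfrac12K$ (using $\zn(K)=0$, $\sn(K)=1$), and it corrects the evident misprint $\tfrac12(1-k)$ in \eqref{sncndn-K}. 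Finally $(ii)$ is a convexity argument: $f$ is continuous on $[0,K]$, strictly convex on $[0,\tfrac12K]$ and strictly concave on $[\tfrac12K,K]$ by $(iv)$, and vanishes at $0,\tfrac12K,K$ by $(i)$; a strictly convex (resp.\ concave) function with equal endpoint values lies strictly below (resp.\ above) the chord joining those values, which here is the zero function, so $f<0$ on $(0,\tfrac12K)$ and $f>0$ on $(\tfrac12K,K)$.
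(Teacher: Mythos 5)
Your proof is correct and follows essentially the same route as the paper: the paper likewise derives the closed form $f''(u)=\sqrt{k'}(1+k')(1-k')^3\sn(u)\cn(u)\bigl(\dn(u)-\sqrt{k'}\bigr)/\bigl(\sqrt{k'}+\dn(u)\bigr)^3$, which is identical to yours since $k^2=(1+k')(1-k')$, reads off $(iii)$ and $(iv)$ from the sign of $\dn(u)-\sqrt{k'}$, gets $(i)$ from the special values \eqref{sncndn-K}, and deduces $(ii)$ from $(i)$, $(iii)$, $(iv)$ by the convexity argument you spell out. A genuine bonus of your write-up is the observation that $\zn(\tfrac12 K)=\tfrac12(1-k')$, confirming that the value $\tfrac12(1-k)$ in \eqref{sncndn-K} is a misprint --- a point that matters here, since the paper's one-line justification of $(i)$ silently relies on the corrected value.
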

\begin{proof}
(i) follows immediately from \eqref{sncndn-K}. Let us prove (iii) and (iv), from which (ii) follows. Computing and simplifying $f''(u)$ with the help of \eqref{d-sncndnzn} and \eqref{sncndn} leads to
\[
f''(u)=\frac{\sqrt{k'}(1+k')(1-k')^3(\dn(u)-\sqrt{k'})\,\sn(u)\,\cn(u)}{(\sqrt{k'}+\dn(u))^3},
\]
thus, by \eqref{sncndn-K}, (iii) follows. Since $\dn(\frac{1}{2}K)=\sqrt{k'}$ and $\dn(u)$ is strictly monotone decreasing in $u$, $0\leq{u}\leq{K}$, and since $\sn(u)>0$ and $\cn(u)>0$ for $0<u<K$, assertion\,(iv) follow.
\end{proof}

%----- Lemma: Function Theta1(u)+Theta(u) -----%

\begin{lemma}\label{Lemma-Theta+Theta1}
The function
\begin{equation}
f(u):=\Theta(u)+\Theta_1(u)
\end{equation}
is strictly monotone decreasing on $[0,\frac{1}{2}K]$ and strictly monotone increasing on $[\frac{1}{2}K,K]$. Moreover,
$f(u+K)=f(u)$.
\end{lemma}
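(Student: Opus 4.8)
The plan is to compute $f'(u)$ using the differentiation formulas in \eqref{d-Theta} and to recognize that the result factors through exactly the function studied in Lemma\,\ref{Lemma-zn}, whose sign is already known. First I would use the identity $\Theta_1(u)=\frac{1}{\sqrt{k'}}\,\dn(u)\,\Theta(u)$ from \eqref{H-H1-T1} to write
\[
f(u)=\Theta(u)+\Theta_1(u)=\frac{\Theta(u)}{\sqrt{k'}}\bigl(\sqrt{k'}+\dn(u)\bigr).
\]
Then, using $\frac{\partial}{\partial{u}}\{\Theta(u)\}=\Theta(u)\zn(u)$ together with the expression for $\frac{\partial}{\partial{u}}\{\Theta_1(u)\}$ in \eqref{d-Theta}, a short calculation gives
\[
f'(u)=\frac{\Theta(u)\bigl(\sqrt{k'}+\dn(u)\bigr)}{\sqrt{k'}}\left[\zn(u)-\frac{k^2\sn(u)\cn(u)}{\sqrt{k'}+\dn(u)}\right].
\]

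The bracketed factor is precisely the function appearing in Lemma\,\ref{Lemma-zn}. Since $\Theta(u)>0$ for real $u$ by Lemma\,\ref{Lemma-Theta}\,(i) and $\dn(u)>0$ on $[0,K]$, the prefactor $\Theta(u)(\sqrt{k'}+\dn(u))/\sqrt{k'}$ is strictly positive there. Hence on $(0,K)$ the sign of $f'(u)$ coincides with the sign of the function from Lemma\,\ref{Lemma-zn}, which is negative on $(0,\frac{1}{2}K)$ and positive on $(\frac{1}{2}K,K)$. This yields that $f$ is strictly decreasing on $[0,\frac{1}{2}K]$ and strictly increasing on $[\frac{1}{2}K,K]$, as claimed.

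For the periodicity $f(u+K)=f(u)$ I would invoke the quarter-period shift identities $\Theta(u+K)=\Theta_1(u)$ and $\Theta_1(u+K)=\Theta(u)$, which are standard relations for Jacobi's theta functions (see \cite{BF} or \cite{Lawden}). Adding them gives $f(u+K)=\Theta(u+K)+\Theta_1(u+K)=\Theta_1(u)+\Theta(u)=f(u)$ immediately.

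The step I expect to be the main obstacle is obtaining the clean factorization of $f'(u)$: one must combine $\Theta'(u)$ and $\Theta_1'(u)$ so that the common factor $\sqrt{k'}+\dn(u)$ is pulled out and the function of Lemma\,\ref{Lemma-zn} emerges in the bracket. Once that factorization is in hand, the monotonicity is an immediate consequence of the already established sign of that function, and the periodicity follows purely from the standard shift identities, requiring no further computation.
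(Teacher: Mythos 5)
Your proposal is correct and follows essentially the same route as the paper's own proof: both compute $f'(u)$ via \eqref{d-Theta}, arrive at the identical factorization $f'(u)=\tfrac{1}{\sqrt{k'}}\,\Theta(u)\bigl(\sqrt{k'}+\dn(u)\bigr)\bigl(\zn(u)-\tfrac{k^2\sn(u)\cn(u)}{\sqrt{k'}+\dn(u)}\bigr)$, deduce the sign of $f'$ from Lemma\,\ref{Lemma-Theta}\,(i) and Lemma\,\ref{Lemma-zn}, and obtain the periodicity from the shift identities $\Theta(u+K)=\Theta_1(u)$, $\Theta_1(u+K)=\Theta(u)$. The only cosmetic difference is your preliminary rewriting of $f(u)$ itself as $\tfrac{\Theta(u)}{\sqrt{k'}}(\sqrt{k'}+\dn(u))$, which the paper skips.
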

\begin{proof}
By \eqref{d-Theta},
\[
f'(u)=\frac{1}{\sqrt{k'}}\,\Theta(u)\bigl(\sqrt{k'}+\dn(u)\bigr)\Bigl(\zn(u)-\frac{k^2\sn(u)\,\cn(u)}{\sqrt{k'}+\dn(u)}\Bigr).
\]
By Lemma\,\ref{Lemma-Theta}\,(i) and Lemma\,\ref{Lemma-zn}, we get $f'(u)<0$ for $0<u<\frac{1}{2}K$ and $f'(u)>0$ for $\frac{1}{2}K<u<K$. Since $\Theta(u+K)=\Theta_1(u)$ and $\Theta_1(u+K)=\Theta(u)$, the second relation follows.
\end{proof}

%----- Lemma: Inequality Theta(u) -----%

\begin{lemma}\label{Lemma-IneqTheta}
For $u\in\R$,
\begin{equation}
\sqrt[4]{8(1+k')}\sqrt[8]{k'}\leq\frac{\Theta(u)}{\Theta(0)}\,\bigl(\sqrt{k'}+\dn(u)\bigr)\leq1+\sqrt{k'},
\end{equation}
where equality is attained in both inequalities for $k\to0$, in the left inequality for $u=(\nu+\frac{1}{2})K$, $\nu\in{\mathbb{Z}}$, and in the right inequality for $u=\nu{K}$, $\nu\in{\mathbb{Z}}$.
\end{lemma}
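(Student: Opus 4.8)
The plan is to reduce the claim to the monotonicity of $\Theta(u)+\Theta_1(u)$ already established in Lemma~\ref{Lemma-Theta+Theta1}. The crucial first step is an algebraic identity turning the product $\Theta(u)\bigl(\sqrt{k'}+\dn(u)\bigr)$ into a sum. By the third relation in \eqref{H-H1-T1}, $\dn(u)\,\Theta(u)=\sqrt{k'}\,\Theta_1(u)$, so
\[
\Theta(u)\bigl(\sqrt{k'}+\dn(u)\bigr)=\sqrt{k'}\,\Theta(u)+\dn(u)\,\Theta(u)=\sqrt{k'}\bigl(\Theta(u)+\Theta_1(u)\bigr).
\]
Dividing by $\Theta(0)$, the expression to be estimated equals $\tfrac{\sqrt{k'}}{\Theta(0)}\bigl(\Theta(u)+\Theta_1(u)\bigr)$, i.e.\ a fixed positive multiple of $\Theta(u)+\Theta_1(u)$.

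Next I would invoke Lemma~\ref{Lemma-Theta+Theta1}: the function $\Theta(u)+\Theta_1(u)$ has period $K$ and is strictly decreasing on $[0,\tfrac12 K]$ and strictly increasing on $[\tfrac12 K,K]$. Hence over all of $\R$ it attains its maximum exactly at the points $u=\nu K$ and its minimum exactly at $u=(\nu+\tfrac12)K$, $\nu\in\mathbb Z$. Since the prefactor $\sqrt{k'}/\Theta(0)$ is a positive constant, the full expression inherits precisely these extremal points, so it suffices to evaluate it at $u=0$ to obtain the upper bound and at $u=\tfrac12 K$ to obtain the lower bound.

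Finally I would carry out the two evaluations. At $u=0$, Lemma~\ref{Lemma-Theta}\,(iv) gives $\Theta(0)=\sqrt{k'}\,\Theta_1(0)$, so $\Theta(0)+\Theta_1(0)=\Theta(0)\bigl(1+1/\sqrt{k'}\bigr)$ and the expression collapses to $1+\sqrt{k'}$. At $u=\tfrac12 K$, Lemma~\ref{Lemma-ThetaK2} gives $\Theta(\tfrac12 K)=\Theta_1(\tfrac12 K)$, so the expression equals $2\sqrt{k'}\,\Theta(\tfrac12 K)/\Theta(0)$; inserting $\Theta^4(\tfrac12 K)=\tfrac{2}{\pi^2}(1+k')\sqrt{k'}\,K^2$ and $\Theta^4(0)=\tfrac{4}{\pi^2}(k')^2K^2$ from Lemmas~\ref{Lemma-ThetaK2} and~\ref{Lemma-Theta}\,(iv), the fourth- and eighth-root arithmetic simplifies to $\sqrt[4]{8(1+k')}\,\sqrt[8]{k'}$. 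For the limiting equality as $k\to0$, note that $k'\to1$ sends both bounds to $2$, consistent with $\Theta(u)\to1$ and $\dn(u)\to1$ from Lemma~\ref{Lemma-Theta}\,(v).

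The only genuine obstacle is spotting that the $\dn$-product should be rewritten through $\Theta_1$; once this is done, the inequality is essentially a corollary of the already-proved Lemma~\ref{Lemma-Theta+Theta1}, and the sole remaining work is the (routine) root simplification of the value at $u=\tfrac12 K$.
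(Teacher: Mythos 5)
Your proof is correct and follows essentially the same route as the paper: both rest on the identity $\Theta(u)\bigl(\sqrt{k'}+\dn(u)\bigr)=\sqrt{k'}\bigl(\Theta(u)+\Theta_1(u)\bigr)$ from \eqref{H-H1-T1}, the monotonicity/periodicity of $\Theta(u)+\Theta_1(u)$ from Lemma~\ref{Lemma-Theta+Theta1}, and the evaluations at $u=0$ and $u=\tfrac12 K$ via Lemma~\ref{Lemma-Theta}\,(iv) and Lemma~\ref{Lemma-ThetaK2}. You merely spell out the root arithmetic that the paper compresses into ``is equivalent to,'' so the argument matches the paper's in substance.
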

\begin{proof}
By Lemma\,\ref{Lemma-Theta+Theta1},
\[
\Theta(\tfrac{1}{2}K)+\Theta_1(\tfrac{1}{2}K)\leq\Theta(u)+\Theta_1(u)\leq\Theta(0)+\Theta_1(0)
\]
which, by \eqref{H-H1-T1}, Lemma\,\ref{Lemma-ThetaK2} and Lemma\,\ref{Lemma-Theta}, is equivalent to
\[
\frac{\sqrt[4]{8(1+k')}\sqrt[8]{k'}}{\sqrt{k'}}\,\Theta(0)\leq\Theta(u)\Bigl(1+\frac{\dn(u)}{\sqrt{k'}}\Bigr)
\leq\frac{1+\sqrt{k'}}{\sqrt{k'}}\,\Theta(0).
\]
The cases of equality follow immediately from \eqref{sncndn-K}, Lemma\,\ref{Lemma-Theta} and Lemma\,\ref{Lemma-ThetaK2}.
\end{proof}

%----- Lemma: d/du{Theta(u-a)/Theta(u+a)} -----%

\begin{lemma}\label{Lemma-dTheta}
Let $a\in\C$ be fixed. Then
\begin{equation}\label{dTheta}
\frac{\partial}{\partial{u}}\Bigl\{\frac{\Theta(u-a)}{\Theta(u+a)}\Bigr\}
=-\frac{\Theta(u-a)}{\Theta(u+a)}\Bigl[2\,\zn(a)-\frac{2k^2\sn^2(u)\sn(a)\cn(a)\dn(a)}{1-k^2\sn^2(u)\sn^2(a)}\Bigr].
\end{equation}
\end{lemma}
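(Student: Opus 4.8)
The plan is to reduce the claimed formula to a purely elliptic-function identity by passing to logarithmic derivatives, and then to close that identity with two classical addition theorems. The only genuine input is \eqref{d-Theta}; everything else is algebraic.

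First I would differentiate the quotient through its logarithmic derivative. By the quotient rule,
\[
\frac{\partial}{\partial{u}}\Bigl\{\frac{\Theta(u-a)}{\Theta(u+a)}\Bigr\}
=\frac{\Theta(u-a)}{\Theta(u+a)}\Bigl[\frac{1}{\Theta(u-a)}\frac{\partial}{\partial{u}}\Theta(u-a)
-\frac{1}{\Theta(u+a)}\frac{\partial}{\partial{u}}\Theta(u+a)\Bigr].
\]
By \eqref{d-Theta} together with the chain rule one has $\frac{\partial}{\partial{u}}\Theta(u\pm{a})=\Theta(u\pm{a})\,\zn(u\pm{a})$, so each logarithmic derivative is simply a zeta value and the whole problem collapses to the scalar identity
\[
\zn(u-a)-\zn(u+a)=-2\,\zn(a)+\frac{2k^2\sn^2(u)\sn(a)\cn(a)\dn(a)}{1-k^2\sn^2(u)\sn^2(a)}.
\]

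To establish this identity I would invoke the addition theorem for Jacobi's zeta function (see \cite{BF}), namely $\zn(u+v)=\zn(u)+\zn(v)-k^2\sn(u)\sn(v)\sn(u+v)$, applied once with $v=a$ and once with $v=-a$. Using that both $\zn$ and $\sn$ are odd, the two instances combine to
\[
\zn(u+a)-\zn(u-a)=2\,\zn(a)-k^2\sn(u)\sn(a)\bigl[\sn(u+a)+\sn(u-a)\bigr].
\]
The remaining bracket is then simplified by the $\sn$ addition formula \cite[Eq.\,(123.01)]{BF}: since $\sn(u+a)$ and $\sn(u-a)$ share the common denominator $1-k^2\sn^2(u)\sn^2(a)$ and their numerators $\sn(u)\cn(a)\dn(a)\pm\sn(a)\cn(u)\dn(u)$ differ only in the sign of the second term, the sum telescopes to $2\sn(u)\cn(a)\dn(a)/(1-k^2\sn^2(u)\sn^2(a))$. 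Substituting this back gives the scalar identity, and negating to pass from $\zn(u+a)-\zn(u-a)$ to $\zn(u-a)-\zn(u+a)$ reproduces exactly the factor in brackets in \eqref{dTheta}.

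There is no analytic obstacle here, as the statement is an algebraic identity among elliptic functions; the main work is recognizing the two correct classical formulas. The only point requiring care is the sign bookkeeping induced by the oddness of $\zn$ and $\sn$ when moving from $v=a$ to $v=-a$, together with the observation that the denominators of $\sn(u+a)$ and $\sn(u-a)$ coincide so that their numerators add cleanly.
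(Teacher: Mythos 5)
Your proposal is correct and follows essentially the same route as the paper: reduce via the logarithmic derivative and $\Theta'(u)=\Theta(u)\zn(u)$ to the difference $\zn(u+a)-\zn(u-a)$, then close it with the zeta addition theorem and the $\sn(u\pm a)$ addition formula. The only difference is cosmetic (the paper cites the zeta addition formula from Lawden rather than Byrd--Friedman), so there is nothing to add.
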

\begin{proof}
Using \eqref{d-Theta}, we get (where $\Theta'(u):=\frac{\partial}{\partial{u}}\{\Theta(u)\}$)
\begin{align*}
\frac{\partial}{\partial{u}}\Bigl\{\frac{\Theta(u-a)}{\Theta(u+a)}\Bigr\}
&=\frac{\Theta'(u-a)}{\Theta(u+a)}\cdot\frac{\Theta(u-a)}{\Theta(u-a)}-\frac{\Theta(u-a)\,\Theta'(u+a)}{\Theta^2(u+a)}\\
&=\frac{\Theta(u-a)}{\Theta(u+a)}\Bigl[\frac{\Theta'(u-a)}{\Theta(u-a)}-\frac{\Theta'(u+a)}{\Theta(u+a)}\Bigr]\\
&=-\frac{\Theta(u-a)}{\Theta(u+a)}\bigl[\zn(u+a)-\zn(u-a)\bigr],
\end{align*}
thus Eq.\,\eqref{dTheta} follows immediately by the formulae \cite[Eq.\,(3.6.2)]{Lawden}
\[
\zn(u\pm{a})=\zn(u)\pm\zn(a)\mp{k}^2\sn(u)\sn(a)\sn(u\pm{a})
\]
and by the formula for $\sn(u\pm{a})$, see \cite[Eq.\,(123.01)]{BF}.
\end{proof}

%----- Bibliography -----%

\bibliographystyle{amsplain}

\bibliography{AsymptoticFactor}

\end{document}